\documentclass[12 pt, reqno]{amsart}
\usepackage{amsmath,times,epsfig,amssymb,amsbsy,amscd,amsfonts,amstext,graphicx,color,bm,amsthm}
\usepackage[all]{xy}
\usepackage{graphicx}
\usepackage{hyperref}

\usepackage{tikz,subfigure}
\usepackage{tikz-cd}

\newcommand{\A}{\mathcal{A}}

\newtheorem{Theorem}{Theorem}[section]
\newtheorem{Definition}[Theorem]{Definition}

\newtheorem{Proposition}[Theorem]{Proposition}

\newtheorem{Remark}[Theorem]{Remark}
\newtheorem{Example}[Theorem]{Example}

\DeclareMathOperator{\rk}{rk}

\DeclareMathOperator{\codim}{codim}

\usepackage{tikz-cd}
\usepackage{tikz}
\usetikzlibrary{arrows}
\tikzset{
v/.style={
  circle, draw, inner sep=2pt, minimum size=3pt, fill=black}
}

\begin{document}

\title[Inductive approach to chromatic and characteristic polynomials]{Inductive approach to chromatic and characteristic polynomials}

\begin{abstract} In this article we describe a new inductive approach to compute the chromatic polynomial of simple graphs and the characteristic polynomial of central hyperplane arrangements.
\end{abstract}

\author{Madison Cox}
\address{Madison Cox, Department of Mathematics \& Statistics, Northern Arizona University,
801 S Osborne Drive,
Flagstaff, AZ 86011, USA.}
\email{mfc96@nau.edu}
\author{Michele Torielli}
\address{Michele Torielli, Department of Mathematics \& Statistics, Northern Arizona University,
801 S Osborne Drive,
Flagstaff, AZ 86011, USA.}
\email{michele.torielli@nau.edu}


\date{\today}
\maketitle



\section{Introduction}

Computing the chromatic polynomial of a simple graph or the characteristic polynomial of a central hyperplane arrangement are in general two extremely difficult problems, see \cite{cromatichard, HalpSharir}. 

Another hard problem, related with the computation of characteristic polynomials, is counting the number of chambers of a real hyperplane arrangement. There are several known techniques to compute such invariants, for example using a modular approach \cite{Athanas, PalTorcomb}, and computer algebra systems \cite{BryEbleKuhne, PalTorCoCoA}.

In this article, we describe a new inductive approach to compute the chromatic polynomials of simple graphs and the characteristic polynomials of central hyperplane arrangements, and hence the number of chambers of real arrangements.

This article is organized as follows. In Section 2, we recall the required notions in graph theory and we describe in which situation the chromatic polynomial of a subgraph divides the one of the graph. In Section 3, we generalize the notion of the clique sum graph by introducing the notion of the subgraph sum of two graphs along a common subgraph and we describe how to compute the chromatic polynomial of the new graph from the ones of the starting two graphs. In Section 4, we describe our inductive approach to compute the chromatic polynomial of a simple graph. In Section 5, we recall the required notions on hyperplane arrangements. In Section 6, we recall the notion of the Orlik--Solomon algebra of a hyperplane arrangement and how to use it to describe combinatorial formulas for the coefficients of the characteristic polynomial of a graphic arrangement. In Section 7, we describe our inductive approach to compute the characteristic polynomial and the number of chambers of a graphic arrangements and we use this approach in the case of real $3$-dimensional arrangements. 


\section{Graphs and their chromatic polynomial}

In this paper, we use standard notations for graphs
and we assume that every graph is non-empty, finite, undirected and
simple. We refer to \cite{BondyMurty} as a general reference on the subject.

Consider a graph $G=(V,E)$ and $e\in E$. The \textbf{deletion of the edge $e$} is the graph $G\setminus e=(V,E')$, where $E'=E\setminus\{e\}$, i.e., it is the graph with the same vertex set of $G$ and the edge set is obtained from just deleting the edge $e$. 
 The \textbf{contraction of the edge $e=\{u,v\}$} is the graph $G/e=(V',E')$, where $V'$ is the vertex set $V$ with cardinality $|V|-1$, where vertex $u$ is identified with vertex $v$ in $V$, and $E'=E\setminus\{e\}$ .
 

\begin{Definition}[\cite{read}]
A \textbf{proper coloring} of a graph $G$ is a choice of a color for each vertex of $G$ such that no adjacent vertices
have the same color. If this can be done using $k$ colors, then the graph $G$ is said to be $k$-colorable.
The smallest value of $k$ such that $G$ is $k$-colorable is called the \textbf{chromatic number of $G$} and it is denoted by $\chi(G)$.
\end{Definition}

\begin{Definition}[\cite{read}] Let $G$ be a graph. Its \textbf{chromatic polynomial} $\chi(G,t)$ is the polynomial such that for every $k\ge 0$ we have that $\chi(G,k)$ is the number of $k$-coloring of $G$.
\end{Definition}

\begin{Example}\label{ex:defG0}
Consider $G_0$ the graph in Figure~\ref{Fig:squarediag}. Then $\chi(G_0,t)=t(t-1)(t-2)^2$.
\begin{figure}[t]
\centering
\begin{tikzpicture}
\draw (  0,  2) node[v,label=left:{$ 1 $}](1){};
\draw (2,  2) node[v,label=right:{$ 2 $}](2){};
\draw (  0,  0) node[v,label=left:{$ 3 $}](3){};
\draw (  2,0) node[v,label=right:{$ 4 $}](4){};
\draw (1)--(2);
\draw (1)--(3);
\draw (3)--(4);
\draw (2)--(3);
\draw (2)--(4);
\end{tikzpicture}
\caption{A drawing of the graph $G_0$}\label{Fig:squarediag}
\end{figure}
\end{Example}

\begin{Theorem}[Deletion-contraction formula, \cite{read}]\label{Theo:delcontr}
Let $G=(V,E)$ be a graph. Then for every $e\in E$
$$\chi(G,t)=\chi(G\setminus e,t)-\chi(G/e,t). $$
\end{Theorem}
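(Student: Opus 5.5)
The plan is a bijective counting argument: show that for every nonnegative integer $k$,
$$\chi(G\setminus e,k)=\chi(G,k)+\chi(G/e,k).$$
Both sides are polynomials in $t$ by the definition of the chromatic polynomial. If they agree at infinitely many values of $t$, their difference is a polynomial with infinitely many roots, so it is identically zero. Rearranging then gives the formula in Theorem~\ref{Theo:delcontr}.

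Fix $k\ge 0$ and write $e=\{u,v\}$. Let $\mathcal{C}$ be the set of proper $k$-colorings of $G\setminus e$. Since $G\setminus e$ has the same vertex set as $G$ and one fewer edge, every proper coloring of $G$ is a proper coloring of $G\setminus e$. I would split $\mathcal{C}$ into two disjoint sets:
\begin{itemize}
\item $\mathcal{C}_{\neq}$, the colorings with $c(u)\neq c(v)$;
\item $\mathcal{C}_{=}$, the colorings with $c(u)=c(v)$.
\end{itemize}
A coloring of $G\setminus e$ is proper for $G$ exactly when it also respects the edge $e$, that is, when $c(u)\neq c(v)$. Hence $|\mathcal{C}_{\neq}|=\chi(G,k)$.

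For the other set I would build a bijection between $\mathcal{C}_{=}$ and the proper $k$-colorings of $G/e$. Let $w$ be the vertex of $G/e$ obtained by identifying $u$ and $v$. Given $c\in\mathcal{C}_{=}$, define $c'$ on $G/e$ by $c'(w)=c(u)=c(v)$ and $c'(x)=c(x)$ for every other vertex $x$.
\begin{itemize}
\item \emph{$c'$ is proper.} The edges of $G/e$ are the edges of $G$ other than $e$, with $u$ and $v$ renamed $w$. Each such edge therefore comes from an edge of $G\setminus e$ whose endpoints receive different colors under $c$.
\item \emph{The map is invertible.} Given a proper coloring $c'$ of $G/e$, define $c$ on $G\setminus e$ by giving both $u$ and $v$ the color $c'(w)$ and keeping all other colors. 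Each edge of $G\setminus e$ maps to an edge of $G/e$, so $c$ is proper, and $c(u)=c(v)$ by construction. The two maps are mutually inverse.
\end{itemize}
This gives $|\mathcal{C}_{=}|=\chi(G/e,k)$, and adding the two counts yields the displayed identity.

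The main obstacle is making the bijection precise when $G/e$ is a multigraph. This happens when $u$ and $v$ have a common neighbor $x$: the edges $\{u,x\}$ and $\{v,x\}$ both become $\{w,x\}$. A parallel edge imposes the same constraint as a single edge, so we may replace parallel edges by one edge without changing the set of proper colorings. This is compatible with the paper's convention that all graphs are simple. Loops cannot arise, because $G$ is simple and $e$ itself is removed in forming $G/e$.
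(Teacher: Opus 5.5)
Your proof is correct. The paper gives no proof of its own and simply cites \cite{read}; your argument is the standard counting proof of this reduction formula, splitting the proper $k$-colorings of $G\setminus e$ according to whether the endpoints of $e$ receive the same color, and your treatment of parallel edges in $G/e$ is right under the paper's simple-graph convention. Taking the existence of the polynomial from the paper's definition is also harmless, since your integer identity together with induction on $|E|$, starting from $k^{|V|}$ for edgeless graphs, shows independently that the count is a polynomial in $k$.
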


\begin{Theorem}[\cite{read}]\label{Theo:chromconnectcomp} Let $G$ be a simple graph with connected components $G_1,\dots, G_r$. Then $\chi(G,t)=\prod_{i=1}^r\chi(G_i,t)$.
\end{Theorem}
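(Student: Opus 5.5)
The plan is to prove the identity first at every nonnegative integer $k$ by counting colorings directly, and then pass to polynomials.

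Let $G$ have vertex set $V$ and components $G_i=(V_i,E_i)$ for $i=1,\dots,r$. The sets $V_1,\dots,V_r$ partition $V$. Because the $G_i$ are the connected components, every edge of $G$ has both endpoints in a single $V_i$, so $E$ is the disjoint union of $E_1,\dots,E_r$.

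Fix $k\ge 0$ and consider the restriction map
$$c\mapsto (c|_{V_1},\dots,c|_{V_r})$$
from $k$-colorings of $G$ to $r$-tuples of $k$-colorings of the $G_i$. I will check it is a bijection.

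\textbf{Well defined.} If $c$ is a proper coloring of $G$, then each restriction $c|_{V_i}$ is a proper coloring of $G_i$, since $E_i\subseteq E$.

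\textbf{Injective and surjective.} Since the $V_i$ partition $V$, a coloring of $G$ is the same thing as an $r$-tuple of colorings $c_i\colon V_i\to\{1,\dots,k\}$, which gives injectivity. For surjectivity, glue proper colorings $c_i$ of the $G_i$ into a single coloring $c$ of $V$. Any edge of $G$ lies in some $E_i$, and its endpoints receive different colors under $c_i$. Hence $c$ is proper.

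The bijection gives $\chi(G,k)=\prod_{i=1}^r\chi(G_i,k)$ for every integer $k\ge 0$.

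Both sides are polynomials in $t$: the left side by definition, and the right side as a product of chromatic polynomials. They agree at infinitely many values, so their difference is a polynomial with infinitely many roots and must be zero. Therefore $\chi(G,t)=\prod_{i=1}^r\chi(G_i,t)$.

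None of the steps is a real obstacle. The only point needing care is the observation that no edge joins two different components, which is exactly what makes the gluing step produce a proper coloring.
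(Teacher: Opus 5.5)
Your proof is correct and complete. The paper gives no proof of its own (it only cites Read), and your argument is the standard proof of this fact: restriction to the components is a bijection on proper $k$-colorings because no edge joins two components, so $\chi(G,k)=\prod_{i}\chi(G_i,k)$ for all $k\ge 0$, and two polynomials that agree at infinitely many points are equal.
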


The following two results are well known, but we write our own proof for completeness.

\begin{Proposition}\label{prop:chartree} Consider $n\ge 1$ and $T_n$ a tree graph with $n$ vertices. Then $$\chi(T_n,t)=t(t-1)^{n-1}.$$
\end{Proposition}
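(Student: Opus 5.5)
We argue by induction on $n$, using the deletion-contraction formula of Theorem~\ref{Theo:delcontr} together with the multiplicativity over connected components of Theorem~\ref{Theo:chromconnectcomp}. For the base case $n=1$, the tree $T_1$ is a single vertex with no edges, so every one of the $t$ colors gives a proper coloring and $\chi(T_1,t)=t=t(t-1)^0$.

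For the inductive step, let $n\ge 2$ and assume the formula holds for all trees with fewer than $n$ vertices. Since $T_n$ is a tree with at least two vertices, it has at least one edge. We choose a leaf $v$ with unique neighbor $u$ and set $e=\{u,v\}$. We then identify the two graphs appearing in Theorem~\ref{Theo:delcontr}.
\begin{itemize}
\item The deletion $T_n\setminus e$ has two connected components: the isolated vertex $v$, and the tree $T_n - v$ on $n-1$ vertices.
\item The contraction $T_n/e$ identifies $v$ with $u$. Because $v$ is a leaf, no multiple edges are created, and the result is again a tree on $n-1$ vertices, isomorphic to $T_n - v$.
\end{itemize}
By Theorem~\ref{Theo:chromconnectcomp} and the inductive hypothesis,
$$\chi(T_n\setminus e,t)=t\cdot t(t-1)^{n-2},\qquad \chi(T_n/e,t)=t(t-1)^{n-2}.$$
Theorem~\ref{Theo:delcontr} therefore gives
$$\chi(T_n,t)=t(t-1)^{n-2}(t-1)=t(t-1)^{n-1}.$$

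The main point needing care is the structural claim that deleting a leaf edge leaves a tree plus an isolated vertex, and that contracting it yields a simple tree on $n-1$ vertices. This follows from the standard facts that every tree with $n\ge2$ vertices has a leaf and that removing a leaf preserves connectivity and acyclicity (see \cite{BondyMurty}).

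Alternatively, one can count colorings directly. Root the tree at the leaf's neighbor, or at any vertex, and color in breadth-first order. The root gets $t$ choices, and each subsequent vertex has exactly one already-colored neighbor, its parent, so it gets $t-1$ choices. This gives $t(t-1)^{n-1}$ for every $k\ge0$, and hence the polynomial identity.
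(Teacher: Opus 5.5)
Your proof is correct and is essentially the paper's argument: induction on $n$ via deletion--contraction on a leaf edge, where the deletion is a tree on $n-1$ vertices plus an isolated vertex and the contraction is a tree on $n-1$ vertices, combined with multiplicativity over connected components. Your final computation $t(t-1)^{n-2}(t-1)=t(t-1)^{n-1}$ is correct, whereas the paper's last line has a typo (it writes $t(t-1)^n$), and your breadth-first counting argument is a valid independent alternative.
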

\begin{proof}
We will prove the statement by induction on $n$. If $n=1$, the graph $T_1$ is just an isolated vertex and hence $\chi(T_1,t)=t$. Consider now $n\ge 2$ and $e=\{i,j\}$ an edge of $T_n$ such that $\deg(i)=1$ or $\deg(j)=1$. Then $T_n/e$ is a tree with $n-1$ vertices and $T_n\setminus e$ is a forest with two connected components, one is a tree with $n-1$ vertices and the other is an isolated vertex. Using Theorem~\ref{Theo:delcontr} and Theorem~\ref{Theo:chromconnectcomp}, we have
$$\chi(T_n,t)=\chi(T_n\setminus e,t)-\chi(T_n/e,t)= t^2(t-1)^{n-2}-t(t-1)^{n-2}=t(t-1)^n.$$
\end{proof}

\begin{Proposition}\label{prop:charcomplete} Consider $n\ge 2$ and $K_n$ the complete graph on $n$ vertices. Then $$\chi(K_n,t)=t(t-1)\cdots(t-(n-1)).$$
\end{Proposition}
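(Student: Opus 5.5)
The plan is to proceed by induction on $n$, in the same style as the proof of Proposition~\ref{prop:chartree}. I would mainly use a direct counting argument, and keep the deletion-contraction formula of Theorem~\ref{Theo:delcontr} only as a cross-check.

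For the base case $n=2$, the graph $K_2$ is a single edge, which is a tree on $2$ vertices. By Proposition~\ref{prop:chartree}, $\chi(K_2,t)=t(t-1)$, as required.

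For the inductive step, let $n\ge 3$ and assume $\chi(K_{n-1},t)=t(t-1)\cdots(t-(n-2))$. Pick a vertex $v$ of $K_n$. Deleting $v$ and its incident edges leaves a copy of $K_{n-1}$. Fix an integer $k\ge 0$.
\begin{itemize}
\item Every proper $k$-coloring of $K_n$ restricts to a proper $k$-coloring of this $K_{n-1}$.
\item Conversely, fix a proper $k$-coloring of $K_{n-1}$. Its $n-1$ vertices carry pairwise distinct colors, because any two of them are adjacent.
\item The vertex $v$ is adjacent to all of them, so $v$ can receive exactly $k-(n-1)$ colors when $k\ge n-1$, and none otherwise.
\end{itemize}
Hence $\chi(K_n,k)=\chi(K_{n-1},k)\,(k-(n-1))$ for every $k\ge n-1$. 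This identity also holds for $0\le k<n-1$: then $\chi(K_{n-1},k)=0$ by the induction hypothesis, since the product $k(k-1)\cdots(k-(n-2))$ has a zero factor, and $\chi(K_n,k)=0$ as well.

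Two polynomials that agree on infinitely many integers are equal, so $\chi(K_n,t)=(t-(n-1))\,\chi(K_{n-1},t)$, and the induction hypothesis gives the formula.

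The only delicate point is this last passage from counting at integers $k$ to an identity of polynomials. It is harmless because the paper defines $\chi(G,t)$ as the polynomial interpolating the counts $\chi(G,k)$ for all $k\ge 0$.

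A purely deletion-contraction route would be awkward. Deleting an edge of $K_n$ does not produce a complete graph, so the induction would not close without a more general formula, for instance $\chi(K_n\setminus e,t)=\chi(K_n,t)+\chi(K_{n-1},t)$. For that reason I would commit to the counting argument above.
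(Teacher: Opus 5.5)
Your proof is correct, but it takes a different route from the paper's.

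\textbf{The paper's route.} The paper stays inside the deletion--contraction framework. It deletes the $n-1$ edges $e_1,\dots,e_{n-1}$ incident to a vertex $v$ one at a time. Every contraction that arises is a copy of $K_{n-1}$: when $e_i=\{v,u_i\}$ is contracted in $K_n\setminus\{e_1,\dots,e_{i-1}\}$, the merged vertex still sees every other $u_j$, because $u_i$ does. The final deletion leaves $K_{n-1}$ plus an isolated vertex. Theorems~\ref{Theo:delcontr} and~\ref{Theo:chromconnectcomp} then give
\[
\chi(K_n,t)=t\,\chi(K_{n-1},t)-(n-1)\,\chi(K_{n-1},t).
\]

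\textbf{Your remark on deletion--contraction.} You say a deletion--contraction induction would be awkward and would need a separate formula for $\chi(K_n\setminus e,t)$. That is too pessimistic. The trick is to delete the whole star at $v$ rather than a single edge, which keeps every contraction complete.

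\textbf{Comparison.} Your counting argument is more elementary and self-contained. It uses only the definition of $\chi(G,t)$ and the fact that a polynomial is determined by its values at infinitely many integers. For that reason, your separate check of the cases $0\le k<n-1$ is harmless but unnecessary. The paper's version has the advantage of rehearsing the deletion--contraction technique that drives the rest of the paper.
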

\begin{proof}
We will prove the statement by induction on $n$. If $n=2$, the graph $K_2$ is just a tree with $2$ vertices and hence, by Proposition~\ref{prop:chartree} $\chi(K_2,t)=t(t-1)$. Consider now $n\ge 3$, $v$ a vertex of $K_n$ and $e_1,\dots,e_{n-1}$ the edges of $K_n$ incident to $v$. Notice that $K_n/e_i$ is a complete graph with $n-1$ vertices. Deleting and contracting the edges $e_1,\dots,e_{n-1}$  one at a time and using Theorem~\ref{Theo:delcontr} and Theorem~\ref{Theo:chromconnectcomp}, we obtain
$$\chi(K_n,t)=\chi(K_n\setminus\{e_1,\dots,e_{n-1}\} ,t)-(n-1)\chi(K_n/e_1,t)=$$ $$=t^2(t-1)\cdots(t-(n-2))-(n-1)t(t-1)\cdots(t-(n-2))=$$ $$=t(t-1)\cdots(t-(n-1)).$$
\end{proof}

There are several known formulas for the chromatic polynomial of cycle graph, see for example \cite{read}. The next result describes the formulation that is more useful for our goal.

\begin{Proposition}\label{prop:charcycle} Consider $n\ge3$ and $C_n$ the cycle graph on $n$ vertices. Then $$\chi(C_n,t)=t(t-1)\sum_{i=0}^{n-2}(-1)^i(t-1)^{n-2-i}.$$
\end{Proposition}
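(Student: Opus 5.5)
We argue by induction on $n$, using the deletion-contraction formula of Theorem~\ref{Theo:delcontr} together with Proposition~\ref{prop:chartree}. Pick any edge $e$ of $C_n$. Deleting $e$ leaves a path on $n$ vertices, which is a tree $T_n$, so $\chi(C_n\setminus e,t)=t(t-1)^{n-1}$. Contracting $e$ gives the cycle $C_{n-1}$ when $n\ge 4$. When $n=3$ it gives a single edge, i.e.\ the tree $T_2$, because the two parallel edges merge in the simple-graph convention. Hence
$$\chi(C_n,t)=t(t-1)^{n-1}-\chi(C_{n-1},t)\quad (n\ge 4),$$
while for $n=3$ we get $t(t-1)^2-t(t-1)$.

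The base case $n=3$ matches the right-hand side of the statement, since $t(t-1)\big((t-1)-1\big)=t(t-1)^2-t(t-1)$. Alternatively, $C_3=K_3$ and Proposition~\ref{prop:charcomplete} applies directly.

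For the inductive step, assume the formula for $C_{n-1}$:
$$\chi(C_{n-1},t)=t(t-1)\sum_{i=0}^{n-3}(-1)^i(t-1)^{n-3-i}.$$
Substituting into the recursion gives
$$\chi(C_n,t)=t(t-1)\Big[(t-1)^{n-2}-\sum_{i=0}^{n-3}(-1)^i(t-1)^{n-3-i}\Big].$$
Reindex the sum with $j=i+1$. Then
$$-\sum_{i=0}^{n-3}(-1)^i(t-1)^{n-3-i}=\sum_{j=1}^{n-2}(-1)^j(t-1)^{n-2-j}.$$
The leading term $(t-1)^{n-2}$ is exactly the $j=0$ term, so the bracket equals $\sum_{j=0}^{n-2}(-1)^j(t-1)^{n-2-j}$. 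This is the claimed formula for $C_n$.

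There is no real obstacle here. The only points needing care are the $n=3$ contraction, where simplicity removes a parallel edge, and keeping the sign bookkeeping straight in the reindexing. One could also cross-check against the known closed form $(t-1)^n+(-1)^n(t-1)$ by summing the geometric series, but that is not needed.
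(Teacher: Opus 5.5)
Your proof is correct and follows essentially the same route as the paper's. Both argue by induction on $n$ with base case $C_3=K_3$, apply deletion--contraction (deleting an edge leaves a tree on $n$ vertices and contracting it gives $C_{n-1}$), and finish with the same combination of the alternating sum; you simply write out the reindexing that the paper leaves implicit.
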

\begin{proof}
We will prove the statement by induction on $n$. If $n=3$, the graph $C_3$ is just a complete graph on $3$ vertices and hence, by Proposition~\ref{prop:charcomplete} $\chi(C_3,t)=t(t-1)(t-2)=t(t-1)(t-1-1)$. Consider now $n\ge 4$ and $e$ and edge of $C_n$. Then $C_n/e$ is a cycle graph with $n-1$ vertices and $C_n\setminus e$ is a tree with $n$ vertices. Using Theorem~\ref{Theo:delcontr}, we have
$$\chi(C_n,t)=\chi(C_n\setminus e,t)-\chi(C_n/e,t)=$$ $$=t(t-1)^{n-1}-t(t-1)\sum_{i=0}^{n-3}(-1)^i(t-1)^{n-3-i}=t(t-1)\sum_{i=0}^{n-2}(-1)^i(t-1)^{n-2-i}.$$
\end{proof}

\begin{Definition} Let $G$ be a simple graph and $H=(V_H,E_H)$ a subgraph of $G$. $H$ is \textbf{path-intersecting} if for every pair $v_i,v_j\in V_H$ of non-adjacent in $H$ vertices there is no path $P=(V_P,E_P)$ in $G$ between $v_i,v_j$ such that $E_H\cap E_P=\emptyset$ and $V_H\cap V_P=\{ v_i,v_j\}$.
\end{Definition}

\begin{Remark} If $G$ is a simple graph and $H$ is a path-intersecting subgraph, then $H$ is induced, isometric and convex. Notice that the opposite implications are all false, see Examples~\ref{Ex:pathclosed}.
\end{Remark}

\begin{Example}\label{Ex:pathclosed}
Consider the graph $G$ in Figure~\ref{Fig:examplechordhomotopy}. If we consider $S=\{1,2,3,4\}$, then $H=G[S]$ is induced but it is not path-intersecting since that path $G[\{2,3,5\}]$ violate our condition. However, this graph is isometric and convex.
\end{Example}


\begin{Theorem}\label{Theo:charinducedsub} Let $G$ be a graph and $H$ a path-intersecting subgraph. Then $\chi(H,t)$ divides $\chi(G,t)$.
\end{Theorem}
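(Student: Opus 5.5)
The plan is induction on the number $m=|E(G)\setminus E_H|$ of edges of $G$ not in $H$, using the deletion--contraction formula (Theorem~\ref{Theo:delcontr}) on an edge $e\notin E_H$. The key point is that both $G\setminus e$ and $G/e$ again contain a copy of $H$ that is path-intersecting. Since divisibility by $\chi(H,t)$ is preserved under differences, the induction will then go through.

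\emph{Base case $m=0$.} Here every edge of $G$ lies in $H$, so $G$ is the disjoint union of $H$ with $|V(G)|-|V_H|$ isolated vertices. By Theorem~\ref{Theo:chromconnectcomp} (applied to the components of $H$ and the isolated vertices),
\[
\chi(G,t)=t^{|V(G)|-|V_H|}\,\chi(H,t),
\]
which is divisible by $\chi(H,t)$.

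\emph{Inductive step.} Take $e=\{u,v\}\in E(G)\setminus E_H$.

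First I would note that $e$ cannot have both endpoints in $V_H$. Otherwise the one-edge path $e$ meets $H$ only in $\{u,v\}$ and uses no edge of $H$, so path-intersection would force $u,v$ to be adjacent in $H$. Simplicity of $G$ would then give $e\in E_H$, a contradiction. Hence at least one endpoint, say $u$, lies outside $V_H$.

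The deletion $G\setminus e$ still contains $H$. It has fewer paths than $G$, so $H$ remains path-intersecting, and $G\setminus e$ has $m-1$ non-$H$ edges.

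For the contraction $G/e$, identify $u$ into $v$, so that $H$ survives unchanged inside $G/e$. Parallel edges may appear; they are replaced by single edges, which does not change the chromatic polynomial. I will check two things.

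1. \emph{No new edge between $H$-vertices.} Any new edge between two $H$-vertices would come from a neighbour $w\in V_H$ of $u$ with $v\in V_H$. Then $w,u,v$ is a path in $G$ avoiding $E_H$ and meeting $V_H$ only in $\{w,v\}$, so $w,v$ are already adjacent in $H$. The induced copy of $H$ is therefore unchanged.

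2. \emph{Path-intersection is preserved.} Any path in $G/e$ between non-adjacent vertices of $H$ that avoids $H$ internally lifts to such a path in $G$, by inserting the edge $e$ where the path passes through the merged vertex. This is forbidden in $G$, so no such path exists in $G/e$.

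The simplified $G/e$ has at most $m-1$ non-$H$ edges. By induction, $\chi(H,t)$ divides both $\chi(G\setminus e,t)$ and $\chi(G/e,t)$, and hence divides $\chi(G,t)$.

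The main obstacle is the contraction step: verifying carefully that identifying $u$ and $v$ neither creates extra edges inside $V_H$ nor new $H$-avoiding paths. This is exactly where the path-intersecting hypothesis (rather than mere inducedness) is used, and the three-vertex path argument in item 1 above is the crux.
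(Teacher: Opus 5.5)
Your proof is correct and follows the same route as the paper. Both argue by induction on $|E_G\setminus E_H|$, settle the base case by writing $G$ as $H$ plus isolated vertices and applying Theorem~\ref{Theo:chromconnectcomp}, and use deletion--contraction on an edge $e\notin E_H$. You also supply the justification the paper leaves implicit: that $e$ has an endpoint outside $V_H$, and that contracting $e$ keeps $H$ as a path-intersecting subgraph. One small imprecision in item 2: when the path passes through the merged vertex via two edges both originally at $u$, the lift replaces the merged vertex by $u$ rather than inserting $e$. The lifted path still avoids $E_H$ and meets $V_H$ only at its endpoints, so the argument is unaffected.
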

\begin{proof}
Assume that $G=(V_G,E_G)$ and $H=(V_H,E_H)$. We will prove the statement by induction on $|E_G\setminus E_H|$. If $E_G=E_H$, then $G$ is obtained from $H$ by adding some isolated vertices (possible none), and hence the statement is true by Theorem~\ref{Theo:chromconnectcomp}. Assume there exists $e\in E_G\setminus E_H$. Then both $G\setminus e$ and $G/e$ have strictly less edges than $G$ and contain $H$ as path-intersecting subgraph. By inductive hypothesis, we then have that $\chi(H,t)$ divides $\chi(G\setminus e,t)$ and $\chi(G/e,t)$. We then conlude by Theorem~\ref{Theo:delcontr}.
\end{proof}

Notice that in Theorem~\ref{Theo:charinducedsub} the assumption that $H$ is path-intersecting cannot be removed.

\begin{Example}\label{Ex:Chordhomotopy}
Consider the graph $G$ in Figure~\ref{Fig:examplechordhomotopy}. If we consider $S=\{1,2,3,4\}$, then $H=G[S]$ is a cycle on $4$ vertices and hence, by Proposition~\ref{prop:charcycle},
we have $\chi(H,t)=t(t-1)(t^2-3t+3)$. Using Theorem~\ref{Theo:delcontr}, we can compute $\chi(G,t)=t(t-1)(t^3-5t^2+10t-7)$. Now $\chi(H,t)$ does not divide $\chi(G,t)$ exactly because there is a path between vertices $2$ and $3$ outside $H$, in fact $H$ is not path-intersecting.
\begin{figure}[t]
\centering
\begin{tikzpicture}
\draw (  0,  2) node[v,label=left:{$ 1 $}](1){};
\draw (2,  2) node[v,label=right:{$ 2 $}](2){};
\draw (  0,  0) node[v,label=left:{$ 3 $}](3){};
\draw (  2,0) node[v,label=right:{$ 4 $}](4){};
\draw (  -1,3) node[v,label=left:{$ 5 $}](5){};
\draw (1)--(2)--(4)--(3);
\draw (1)--(3);
\draw (2)--(5);
\draw (3)--(5);
\end{tikzpicture}
\caption{A drawing of the graph $G$ of Example~\ref{Ex:Chordhomotopy}}\label{Fig:examplechordhomotopy}
\end{figure}
\end{Example}

\begin{Definition} Let $G$ be a graph with $m$ connected components and $v$ a vertex of $G$. We call $v$ a \textbf{cut vertex} if removing $v$ from $G$ gives us a new graph with $s$ connected components such that $s>m$.
\end{Definition}

\begin{Remark} In Theorem~\ref{Theo:charinducedsub}, if we assume that all the vertices of $H$ are cut vertices in $G$ and $H$ is induced, then $H$ is path-intersecting.
\end{Remark}

\begin{Remark}\label{rem:cutvertextopath} In Theorem~\ref{Theo:charinducedsub}, if $H$ is a path of length $m-1$ from $v$ to $w$, then the assumption that $H$ is path-intersecting is equivalent to require that $V_H\setminus\{v, w\}$ are all cut vertices and $H$ is induced.
\end{Remark}

\section{Graph gluing and chromatic polynomials}

In this section we will generalize the notion of the clique sum and the related clique sum Theorem.

Consider two simple graphs $G_1=(V_1,E_1)$ and $G_2=(V_2,E_2)$. Assume there exist $S_1=\{v_1,\dots, v_m\}\subseteq V_1$ and $S_2=\{w_1,\dots, w_m\}\subseteq V_2$ such that $G_1[S_1]\cong G_2[S_2]$. Without loss of generality, we can assume that this isomorphism maps $v_i$ to $w_i$ for all $i=1,\dots, m$. Then we can define 
$V$ to be the vertex set $V_1\cup V_2$ of cardinality $|V_1|+|V_2|-|S_2|$, where the vertices $w_i$ are identified with $v_i$, and $E=E_1\cup E_2$.
With these two sets, we can construct a new graph.
\begin{Definition} The simple graph $G_1\bigoplus_{G_1[S_1]}G_2=(V,E)$ is called the \textbf{subgraph sum graph}.
\end{Definition}

\begin{Remark} In the previous set up, if $G_1[S_1]\cong G_2[S_2]\cong K_m$, then the graph $G_1\bigoplus_{K_m}G_2$ is called a \textbf{clique sum} in the literature and it is usually denoted just by $G_1\bigoplus_{m}G_2$
\end{Remark}

\begin{Theorem}[Clique sum Theorem, \cite{read}]\label{Theo:cliquesum} Consider two graphs $G_1=(V_1,E_1)$ and $G_2=(V_2,E_2)$. Assume there exist $S_1\subseteq V_1$ and $S_2\subseteq V_2$ such that $G_1[S_1]\cong G_2[S_2]\cong K_m$. Then
$$\chi(G_1\bigoplus_{m}G_2,t)=\frac{\chi(G_1,t)\chi(G_2,t)}{\chi(K_m,t)}=\frac{\chi(G_1,t)\chi(G_2,t)}{t(t-1)\cdots(t-(m-1))}. $$
\end{Theorem}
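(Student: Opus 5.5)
The natural route is a direct counting argument on proper colorings, proving the identity for every integer $k\ge 0$. Two polynomials that agree at infinitely many points are equal, so this suffices. Write $G=G_1\bigoplus_m G_2$, and let $K$ be the common clique on the identified vertices $v_1,\dots,v_m$. Since $E=E_1\cup E_2$, the only edges joining a vertex of $V_1\setminus S_1$ to a vertex of $V_2\setminus S_2$ are none: every edge of $G$ lies in $G_1$ or in $G_2$. So a map $c\colon V\to\{1,\dots,k\}$ is a proper $k$-coloring of $G$ exactly when $c|_{V_1}$ is a proper coloring of $G_1$ and $c|_{V_2}$ is a proper coloring of $G_2$. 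This gives a bijection between proper $k$-colorings of $G$ and pairs $(c_1,c_2)$ of proper colorings of $G_1$ and $G_2$ that agree on $S_1\equiv S_2$.

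Next, fix a proper coloring $\sigma$ of $K$, that is, an injective assignment of $m$ distinct colors to $v_1,\dots,v_m$. I will show that the number $N_1(\sigma)$ of proper $k$-colorings of $G_1$ restricting to $\sigma$ on $S_1$ does not depend on $\sigma$. The reason is symmetry: any two injective maps $S_1\to\{1,\dots,k\}$ differ by a permutation $\pi$ of the color set. Composing with $\pi$ is a bijection on proper colorings of $G_1$, and it carries those restricting to $\sigma$ onto those restricting to $\pi\circ\sigma$. Summing over all $\sigma$ then gives $\chi(G_1,k)=\chi(K_m,k)\,N_1$, where $N_1$ is the common value. If $\chi(K_m,k)\neq 0$, that is $k\ge m$, this yields $N_1=\chi(G_1,k)/\chi(K_m,k)$. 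The same argument applies to $G_2$.

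Combining the two steps, for $k\ge m$ we get
$$\chi(G,k)=\sum_{\sigma}N_1N_2=\chi(K_m,k)\frac{\chi(G_1,k)}{\chi(K_m,k)}\frac{\chi(G_2,k)}{\chi(K_m,k)}=\frac{\chi(G_1,k)\chi(G_2,k)}{\chi(K_m,k)}.$$
Hence $\chi(G,t)\chi(K_m,t)=\chi(G_1,t)\chi(G_2,t)$ at infinitely many integers, so it holds as a polynomial identity. By Proposition~\ref{prop:charcomplete}, $\chi(K_m,t)=t(t-1)\cdots(t-(m-1))$, which gives the stated formula. Note that $\chi(K_m,t)$ divides $\chi(G_1,t)$ by Theorem~\ref{Theo:charinducedsub}, since a clique is trivially path-intersecting: it has no non-adjacent pairs. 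So the quotient is a genuine polynomial, although the identity itself does not need this.

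The only delicate points are two. First, the independence of $N_1(\sigma)$ from $\sigma$ relies on $K$ being complete; this is exactly what fails for a general subgraph sum. Second, values $k<m$ must be handled by passing to the polynomial identity rather than dividing by zero.
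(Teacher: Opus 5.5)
Your proof is correct, but it takes a different route from the paper. The paper gives no proof of this statement; it is quoted from \cite{read}. The only derivation inside the paper is the Remark after Theorem~\ref{Theo:sumgraphnopath}, which recovers it as the case $H_1\cong K_m$ of a deletion--contraction induction. There one picks an edge $e\in E_1$ outside the glued subgraph and checks that $G\setminus e$ and $G/e$ are again subgraph sums along the same subgraph; for $G/e$ this is where path-intersecting, automatic for a clique, is used. The argument then closes with the induction hypothesis and Theorem~\ref{Theo:delcontr}.

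You instead count colorings fiber by fiber over the injective colorings of the clique, using that permutations of the color set act transitively on those colorings. This is essentially Read's original argument. Your version is self-contained and pinpoints exactly where completeness is used. It also needs neither the divisibility of Theorem~\ref{Theo:charinducedsub} nor any bookkeeping about how contraction interacts with the gluing. The paper's induction is chosen because the same machinery handles non-complete glued subgraphs.

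Your counting idea in fact reaches that generality too. If $H$ is path-intersecting in $G_1$, each component of $G_1-V_H$ attaches to $H$ along a clique of $H$, since two non-adjacent attachment vertices would be joined by a path avoiding $H$. So the number of extensions of a proper coloring of $H$ to $G_1$ is again independent of that coloring. The same computation then gives $\chi(G,t)\chi(H,t)=\chi(G_1,t)\chi(G_2,t)$, as in Theorem~\ref{Theo:sumgraphnopath}.

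One cosmetic point: Proposition~\ref{prop:charcomplete} is stated only for $n\ge 2$, but you do not need it. Your count of injective maps $S\to\{1,\dots,k\}$ already gives $\chi(K_m,k)=k(k-1)\cdots(k-m+1)$ for every $m$.
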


In general, one would like to generalize Theorem~\ref{Theo:cliquesum} by dropping the assumption that $G_1[S_1]\cong G_2[S_2]$ is isomorphic to a complete graph. Unfortunately, this is false in general.

\begin{Example}\label{Ex:nocliquesum}
Consider the graph $G$ in Figure~\ref{Fig:nocliquesum}. $G$ can be see as the subgraph sum of $G_1=G[\{1,2,3,4,5\}]$ and $G_2=G[\{1,2,3,4,6\}]$ along a $C_4$ given by $G[\{1,2,3,4\}]$.
Now the graphs $G_i$ are isomorphic to the graph described in Example~\ref{Ex:Chordhomotopy}, and hence $\chi(G_i,t)=t(t-1)(t^3-5t^2+10t-7)$.
As a consequence the formula $\frac{\chi(G_1,t)\chi(G_2,t)}{\chi(C_4,t)}$ does not even give us a polynomial, since $\chi(G_i,t)$ is not divisible by $\chi(C_4,t)$, as described in Example~\ref{Ex:Chordhomotopy}. On the other hand, $\chi(G,t)=t(t-1)(t^4 -7t^3 +21t^2 -30t +17).$ 
\begin{figure}[t]
\centering
\begin{tikzpicture}
\draw (  0,  2) node[v,label=left:{$ 1 $}](1){};
\draw (2,  2) node[v,label=right:{$ 2 $}](2){};
\draw (  0,  0) node[v,label=left:{$ 3 $}](3){};
\draw (  2,0) node[v,label=right:{$ 4 $}](4){};
\draw (  -1,3) node[v,label=left:{$ 5 $}](5){};
\draw (  3,3) node[v,label=right:{$ 6 $}](6){};
\draw (1)--(2)--(4)--(3);
\draw (1)--(3);
\draw (2)--(5);
\draw (3)--(5);
\draw (1)--(6);
\draw (4)--(6);
\end{tikzpicture}
\caption{A drawing of the graph $G$ of Example~\ref{Ex:nocliquesum}}\label{Fig:nocliquesum}
\end{figure}
\end{Example}

Using a similar approach as the one used in Theorem~\ref{Theo:charinducedsub}, we have the following.

\begin{Theorem}\label{Theo:sumgraphnopath} Consider two graphs $G_1=(V_1,E_1)$ and $G_2=(V_2,E_2)$. Assume there exist $H_1$ a path-intersecting subgraph of $G_1$ and $H_2$ a path-intersecting subgraph of $G_2$ such that $H_1\cong H_2$. Then
$$\chi(G_1\bigoplus_{H_1}G_2,t)=\frac{\chi(G_1,t)\chi(G_2,t)}{\chi(H_1,t)}. $$
\end{Theorem}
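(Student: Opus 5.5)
Write $G=G_1\bigoplus_{H_1}G_2$ and $H=H_1$. Following the proof of Theorem~\ref{Theo:charinducedsub}, we argue by induction on $N=|E_1\setminus E(H_1)|+|E_2\setminus E(H_2)|$, the number of edges of $G$ outside the common copy of $H$. Throughout we use deletion--contraction (Theorem~\ref{Theo:delcontr}) on one such edge and check that the resulting graphs are again subgraph sums along $H$ with path-intersecting gluing subgraphs.

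\emph{Base case.} If $N=0$, then $G_1$ is $H_1$ plus some isolated vertices, say $a$ of them, and likewise $G_2$ is $H_2$ plus $b$ isolated vertices. The graph $G$ is then $H$ plus $a+b$ isolated vertices. By Theorem~\ref{Theo:chromconnectcomp},
$$\chi(G,t)=t^{a+b}\chi(H,t)=\frac{t^a\chi(H,t)\,t^b\chi(H,t)}{\chi(H,t)}.$$

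\emph{Inductive step.} Take an edge $e\in E_1\setminus E(H_1)$; the case $e\in E_2\setminus E(H_2)$ is symmetric.

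For deletion, $G\setminus e=(G_1\setminus e)\bigoplus_{H}G_2$. Removing an edge outside $H_1$ cannot create a forbidden path, so $H_1$ remains path-intersecting in $G_1\setminus e$.

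For contraction, we claim $G/e=(G_1/e)\bigoplus_{H}G_2$ and that $H_1$ is still a path-intersecting subgraph of $G_1/e$. Write $e=\{u,v\}$.
\begin{itemize}
\item Both endpoints cannot lie in $V(H_1)$: then $e$ is a path between $u,v$ with no edge in $H_1$ and no interior vertex. If $u,v$ were non-adjacent in $H_1$ this violates path-intersection; if adjacent, simplicity of $G_1$ would force $e\in E(H_1)$.
\item So at most one endpoint lies in $H_1$. Contracting $e$ then does not identify two vertices of $H_1$, and $H_1$ survives as a subgraph of $G_1/e$.
\item No new forbidden path appears. Any path in $G_1/e$ between non-adjacent $x,y\in V(H_1)$ avoiding $E(H_1)$ and meeting $V(H_1)$ only in $x,y$ lifts to such a path in $G_1$: if it passes through the merged vertex, insert $e$ where needed. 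The lifted interior still avoids $V(H_1)$ except possibly at the endpoint of $e$ lying in $H_1$, which is then $x$ or $y$. This is the same reasoning implicitly used in Theorem~\ref{Theo:charinducedsub}.
\item The contraction does not change the $G_2$ side, because the merged vertex meets $V_2$ only through $H$.
\end{itemize}
Deletion and contraction also preserve the induced-isomorphism setup, since $H$ is unchanged.

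Both $G\setminus e$ and $G/e$ have smaller $N$, so by induction
$$\chi(G\setminus e,t)=\frac{\chi(G_1\setminus e,t)\chi(G_2,t)}{\chi(H,t)},\qquad \chi(G/e,t)=\frac{\chi(G_1/e,t)\chi(G_2,t)}{\chi(H,t)}.$$
Subtracting and applying Theorem~\ref{Theo:delcontr} to $G$ and to $G_1$ gives
$$\chi(G,t)=\frac{\chi(G_1,t)\chi(G_2,t)}{\chi(H,t)}.$$

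\emph{Main obstacle.} The delicate step is the contraction: showing that path-intersection survives, that $e$ never has both endpoints in $H_1$, and that simplicity is handled. When $e$ has one endpoint in $H_1$, contraction may create parallel edges, which are collapsed in the simple graph. I would check that any such collapse involves an edge of $E(H_1)$ and an outside edge to a vertex of $H_1$; this is consistent because chromatic polynomials ignore parallel edges. I would also check that no parallel edges arise between the two sides, which holds since $G_1$ and $G_2$ meet only in $H$.
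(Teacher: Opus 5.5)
Your proof is correct and is essentially the paper's argument: deletion--contraction on an edge $e\in E_1\setminus E(H_1)$, identifying $G\setminus e$ and $G/e$ as the subgraph sums $(G_1\setminus e)\bigoplus_{H_1}G_2$ and $(G_1/e)\bigoplus_{H_1}G_2$, with path-intersection guaranteeing that $H_1$ survives intact and induced in $G_1/e$.

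Your induction on the number of edges outside $H$ is tidier than the paper's stated induction on $|V|-|V_{H_1}|$. Deletion does not decrease $|V|-|V_{H_1}|$, so the paper's step really relies on edge count anyway. You also check explicitly that path-intersection persists in $G_1/e$, which the paper leaves implicit. The one slip is harmless: collapsed parallel edges can also be two edges outside $H_1$, when $u$ and $v$ share a neighbor outside $H_1$.
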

\begin{proof} As described at the beginning of the section, assume $G_1\bigoplus_{H_1}G_2=(V,E)$, where $V=V_1\cup(V_2\setminus V_{H_2})$, and let $H_i=(V_{H_i},E_{H_i})$ for $i=1,2$. We will prove the statement by induction on $s=|V|-|V_{H_1}|$. If $s=0$, then $V=V_{H_1}$ and hence $G=H_1\cong H_2=G_2$. In this case the formula is trivially true.
Assume now that $s\ge 1$. If $E=E_{H_1}$, then $G$ is the graph $H_1$ with some additional isolated vertices, and hence the formula is true. Consider now $e\in E$ such that $e$ is not an edge of $H_1$. Without loss of generalities, we can assume $e\in E_1$. By Theorem~\ref{Theo:delcontr}, 
$$\chi(G,t)=\chi(G\setminus e,t)-\chi(G/e,t). $$
Now $G\setminus e$ is the subgraph sum of the graph $G_1\setminus e$ and $G_2$ along the graph $(G_1\setminus e)[V_{H_1}]=H_1$, and similarly $G/ e$ is the subgraph sum of the graph $G_1/ e$ and $G_2$ along $(G_1/ e)[V_{H_1}]=H_1$. Notice that the quality $(G_1/ e)[V_{H_1}]=H_1$ is ensured by the assumption that $H_1$ is path-intersecting. Moreover, both $G\setminus e$ and $G/ e$ have less edges than $G$ and hence they satisfy the inductive assumption. As a consequence we have
$$\chi(G,t)=\chi(G\setminus e,t)-\chi(G/e,t)=$$ $$=\frac{\chi(G_1\setminus e,t)\chi(G_2,t)}{\chi(H_1,t)}-\frac{\chi(G_1/e,t)\chi(G_2,t)}{\chi(H_1,t)}=\frac{\chi(G_1,t)\chi(G_2,t)}{\chi(H_1,t)}.$$

\end{proof}

\begin{Remark} In Theorem~\ref{Theo:sumgraphnopath}, if we assume that $G_1[S_1]\cong G_2[S_2]\cong K_m$, then the assumption on non-adjacent vertices is trivially satisfied, and hence we get back the clique sum Theorem~\ref{Theo:cliquesum}. 
\end{Remark}

\section{Inductive approach to chromatic polynomials}

Consider $G=(V,E)$ a simple graph, $v_1,v_2\in V$ such that $e_0=\{v_1,v_2\}\notin E$ and $G'=(V,E')$, where $E'=E\cup e_0$. 
The goal of this section is to describe how to compute $\chi(G',t)$ from $\chi(G,t)$.

\begin{Theorem}\label{Theo:charnonewcycles} Assume $e_0$ does not belong to any cycles of $G'$. Then $$\chi(G',t)=\frac{\chi(G,t)(t-1)}{t}.$$
\end{Theorem}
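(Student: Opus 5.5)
The natural tool here is the deletion--contraction formula, Theorem~\ref{Theo:delcontr}, applied to the new edge $e_0$ in $G'$. Since $G'\setminus e_0=G$, it gives
$$\chi(G',t)=\chi(G,t)-\chi(G'/e_0,t),$$
so the task reduces to showing that $\chi(G'/e_0,t)=\chi(G,t)/t$.

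First I will translate the hypothesis. Saying that $e_0=\{v_1,v_2\}$ lies on no cycle of $G'$ is the same as saying there is no path from $v_1$ to $v_2$ in $G$. Indeed, any such path together with $e_0$ would form a cycle in $G'$, and conversely any cycle of $G'$ through $e_0$ contains such a path. Hence $v_1$ and $v_2$ lie in distinct connected components $A_1\ni v_1$ and $A_2\ni v_2$ of $G$; let $B_1,\dots,B_r$ be the remaining components.

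Next I will describe the contraction. In $G'/e_0$ the components $B_j$ are unchanged, while $A_1$ and $A_2$ are glued at the single identified vertex $v_1=v_2$. No multiple edges arise, because no vertex is adjacent to both $v_1$ and $v_2$. The glued graph is exactly the clique sum $A_1\bigoplus_{1}A_2$ along $K_1$. By the clique sum Theorem~\ref{Theo:cliquesum} with $m=1$, its chromatic polynomial is $\chi(A_1,t)\chi(A_2,t)/t$. Combining this with Theorem~\ref{Theo:chromconnectcomp}, we get
$$\chi(G'/e_0,t)=\frac{\chi(A_1,t)\chi(A_2,t)}{t}\prod_j\chi(B_j,t)=\frac{\chi(G,t)}{t}.$$

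Substituting back gives
$$\chi(G',t)=\chi(G,t)-\frac{\chi(G,t)}{t}=\frac{\chi(G,t)(t-1)}{t},$$
as required.

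The only point needing care is the identification of $G'/e_0$ as a one-vertex clique sum, that is, checking that contraction creates no parallel edges and merges exactly the two components. Both facts follow from the absence of a $v_1$--$v_2$ path in $G$. Alternatively, one could view $G'$ directly as the clique sum $(A_1\cup A_2\cup e_0)$ along $K_2$ and argue via Theorem~\ref{Theo:cliquesum}, but the deletion--contraction route above is the most direct.
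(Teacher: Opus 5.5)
Your proof is correct and follows the paper's own argument. Both apply deletion--contraction to $e_0$, note that $G'\setminus e_0=G$, and identify $G'/e_0$ as a clique sum along $K_1$ of the two components joined by $e_0$, which gives $\chi(G'/e_0,t)=\chi(G,t)/t$; your version is slightly more careful than the paper's, since you explicitly account for the other components $B_j$ and check that contracting $e_0$ creates no parallel edges.
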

\begin{proof} The assumption that $e_0$ does not belong to any cycles of $G'$ implies that $e_0$ is the bridge between two connected components $G_1,G_2$ of $G$. This implies that $G'\setminus e_0=G$ and $G'/ e_0=\tilde{G_1}\bigoplus_1 \tilde{G_2}$, where $\tilde{G_1}$ and $\tilde{G_2}$ are two subgraphs of $G$ containing $G_1$ and $G_2$ respectively.
By Theorem~\ref{Theo:delcontr} we obtain
$$\chi(G',t)=\chi(G'\setminus e_0,t)-\chi(G'/e_0,t)=\chi(G,t)-\frac{\chi(\tilde{G_1},t)\chi(\tilde{G_2},t)}{t}= $$
$$=\chi(G,t)-\frac{\chi(G,t)}{t}=\frac{\chi(G,t)(t-1)}{t}.$$

\end{proof}

\begin{Theorem}\label{Theo:addoneminimalcycle} Assume there exists $H=(V_H,E_H)$ a subgraph of $G'$ such that $V_H=\{v_1,v_2,\dots,v_m\}$, $E_H=\{e_0,\{v_2,v_3\},\dots\{v_{m-1},v_m\},\{v_1,v_m\}\}$, $H\cong C_m$, for some $m\ge3$ and 
the vertices $v_3,\dots, v_m$ are cut vertices of $G$. Then
 $$\chi(G',t)=\frac{\chi(G,t)\chi(C_m,t)}{t(t-1)^{m-1}}=\frac{\chi(G,t)\sum_{i=0}^{m-2}(-1)^i(t-1)^{m-2-i}}{(t-1)^{m-2}}.$$
 
\end{Theorem}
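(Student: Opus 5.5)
The natural route is to realise $G'$ as a subgraph sum and apply Theorem~\ref{Theo:sumgraphnopath}. The relevant subgraph is the path $P=H\setminus e_0$, with vertices $v_1,v_2,\dots,v_m$ in the order $v_2,v_3,\dots,v_m,v_1$. This is a path on $m$ vertices, i.e.\ a tree, so $\chi(P,t)=t(t-1)^{m-1}$ by Proposition~\ref{prop:chartree}. This matches the denominator in the statement. The plan is to show
$$G'\cong G\bigoplus_{P} H,$$
with $P$ path-intersecting both in $G$ and in $H\cong C_m$. Theorem~\ref{Theo:sumgraphnopath} then gives $\chi(G',t)=\chi(G,t)\chi(C_m,t)/\chi(P,t)$. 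The second expression in the statement follows by substituting Proposition~\ref{prop:charcycle} and cancelling $t(t-1)$.

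Structurally, the gluing is clear. Take $G_1=G$ and $G_2=H$, and glue them along the common copy of $P$, identifying the vertices of $P$ in each. The edge set of the result is $E\cup E_H=E\cup\{e_0\}=E'$ and the vertex set is $V$, so the subgraph sum is exactly $G'$. Note that $P\subseteq G$, since all edges of $H$ other than $e_0$ lie in $E$.

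The hypotheses of Theorem~\ref{Theo:sumgraphnopath} then need checking.

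First, $P$ must be path-intersecting in $H=C_m$. The only non-adjacent pairs of $P$ that are joined in $H$ by a path avoiding $E_P$ are $v_1,v_2$, through the edge $e_0$. Other pairs $v_i,v_j$ have no such path, because every path in $H$ between them uses an edge of $P$. For the pair $v_1,v_2$ the definition is awkward, since the path $e_0$ does exist. So I would use the proof mechanism of Theorem~\ref{Theo:sumgraphnopath} directly rather than its literal hypothesis. Deletion–contraction is performed only on edges outside the glued subgraph, and the only such edge on the $H$ side is $e_0$ itself. Deleting $e_0$ gives $G$, and $\chi(P,t)$ divides $\chi(P,t)$. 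Contracting $e_0$ gives $G/\{v_1,v_2\}$. So it is cleaner to apply deletion–contraction to $e_0$ directly:
$$\chi(G',t)=\chi(G,t)-\chi(G'/e_0,t),$$
and to compute $\chi(G'/e_0,t)$ using the cut-vertex hypothesis.

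Second, by Remark~\ref{rem:cutvertextopath}, $P$ is path-intersecting in $G$ precisely because $v_3,\dots,v_m$ are cut vertices of $G$ (together with inducedness, which I would argue from the same separation). Hence Theorem~\ref{Theo:charinducedsub} shows that $\chi(P,t)$ divides $\chi(G,t)$.

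The main obstacle is making the cut-vertex hypothesis produce the factorisation. I would show that the cut vertices split $G$ as an iterated clique sum along single vertices:
$$G=B_1\bigoplus_1 B_2\bigoplus_1\cdots,$$
where consecutive edges of $P$ lie in different blocks and $v_1,v_2$ lie in different pieces. Then $\chi(G,t)=t(t-1)^{m-1}\,Q$ for an explicit $Q$, by Theorem~\ref{Theo:cliquesum}. Contracting $e_0$ merges $v_1$ and $v_2$, which closes the path into an $(m-1)$-cycle glued to the same pieces. Iterating this argument, equivalently inducting on $m$ exactly as in the proof of Proposition~\ref{prop:charcycle}, gives
$$\chi(G'/e_0,t)=\frac{\chi(C_{m-1},t)}{t(t-1)^{m-2}}\,\chi(G,t).$$
Plugging this into deletion–contraction and using $\chi(C_m)=t(t-1)^{m-1}-\chi(C_{m-1})$ yields the claimed formula. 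The base case $m=3$ is a single clique-sum computation.
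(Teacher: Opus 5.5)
Your overall route is the paper's. You rightly note that Theorem~\ref{Theo:sumgraphnopath} does not apply literally to $G\bigoplus_P C_m$, since $P$ is not path-intersecting in $C_m$. You then fall back on deletion--contraction at $e_0$ plus the cut-vertex decomposition, as the paper does. The gap is in your key identity for the contracted graph, which is off by a factor of $t-1$. For $m\ge 4$ the correct statement is
\[
\chi(G'/e_0,t)=\frac{\chi(C_{m-1},t)}{t(t-1)^{m-1}}\,\chi(G,t).
\]
For example, let $G$ be the path $v_2v_3v_4v_1$ ($m=4$). Then $G'/e_0\cong C_3$ and $\chi(G'/e_0,t)=t(t-1)(t-2)$, whereas your formula gives $t(t-1)^2(t-2)$.

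Consequently your final step fails. With your denominator, deletion--contraction gives $\chi(G,t)\bigl(t(t-1)^{m-2}-\chi(C_{m-1},t)\bigr)/\bigl(t(t-1)^{m-2}\bigr)$. For the path this is $t(t-1)^2$, not $\chi(C_4,t)=t(t-1)(t^2-3t+3)$. The identity $\chi(C_m,t)=t(t-1)^{m-1}-\chi(C_{m-1},t)$ closes the argument only with denominator $t(t-1)^{m-1}$.

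Your formula is exactly the multiplier from the $(m-1)$ case of the theorem. That is, it is what you would get if $G'/e_0$ were $G$ plus one edge closing an $(m-1)$-cycle. It is not, because contraction also changes how the pieces are glued.

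Let $B_2,\dots,B_m$ be the pieces of your decomposition, with $\{v_i,v_{i+1}\}$ an edge of $B_i$ (where $v_{m+1}=v_1$). In $G$ they are glued at the single vertices $v_3,\dots,v_m$, so $\chi(G,t)=\prod_i\chi(B_i,t)/t^{m-2}$. In $G'/e_0$ they form the cycle $C_{m-1}$ on $v_{12},v_3,\dots,v_m$, with each $B_i$ attached along its edge of that cycle. Theorem~\ref{Theo:cliquesum} then gives $\chi(G'/e_0,t)=\chi(C_{m-1},t)\prod_i\chi(B_i,t)/\bigl(t(t-1)\bigr)^{m-1}$. The quotient is the corrected formula, and the extra $1/(t-1)$ is the cost of passing from $1$-clique sums to $2$-clique sums. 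The paper handles this by writing $G$ as a sum along the path $v_3\cdots v_m$ and $G'/e_0$ as a sum along $C_{m-1}$, then inducting on $m$. With this repair your argument is complete and needs no induction.

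One caveat, shared with the paper: the chain decomposition requires each $v_i$ to separate $v_{i-1}$ from $v_{i+1}$ in $G$, not merely to be a cut vertex. Under the literal hypothesis the statement fails. Take $m=3$ and let $G$ be the $4$-cycle $v_1v_3v_2w$ with a pendant vertex at $v_3$. Then $\chi(G',t)=t(t-1)^2(t-2)^2$, but the formula predicts $t(t-1)(t-2)(t^2-3t+3)$.
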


\begin{proof} We will prove the statement on induction on $m\ge 3$.
Assume $H\cong C_3=K_3$. In this situation $V_H=\{v_1,v_2,v_3\}$ and $v_3$ is a cut vertex of $G$. As a consequence, $G'\setminus e_0=G$ can be seen as the clique sum of two graphs $G_1$ and $G_2$ at a $1$-clique, i.e., at the vertex $v_3$, and similarly, $G'/e_0$ can be seen as the clique sum of the two graphs $G_1$ and $G_2$ at a $2$-clique, i.e., at the edge $\{v_1,v_3\}$. By Theorem~\ref{Theo:delcontr} and Theorem~\ref{Theo:sumgraphnopath}, we obtain
$$\chi(G',t)=\chi(G'\setminus e_0,t)-\chi(G'/e_0,t)=\frac{\chi(G_1,t)\chi(G_2,t)}{t}-\frac{\chi(G_1,t)\chi(G_2,t)}{t(t-1)}=$$
$$=\frac{\chi(G_1,t)\chi(G_2,t)(t-2)}{t(t-1)}=\frac{\chi(G,t)(t-2)}{(t-1)}.$$
Assume now that $m\ge 3$. In this situation, we have that $G'\setminus e_0=G$ can be seen as the subgraph sum of two graphs $G_1$ and $G_2$ at a path on $m-2$ vertices, i.e., at the vertices $v_3,\dots, v_m$, such that $v_i\in G_i$, for $i=1,2$. Similarly, $G'/e_0$ can be seen as the subgraph sum of the two graphs $\tilde{G_1}$ and $\tilde{G_2}$ at $H/e_0\cong C_{m-1}$, where $\tilde{G_1}$ is the graph obtained from $G_1$ adding the edge $\{v_1,v_m\}$ and $\tilde{G_2}$ is the graph obtained from $G_2$ adding the edge $\{v_2,v_m\}$. Notice that the graph $\tilde{G_1}$, respectively $\tilde{G_2}$, is obtained from the graph $G_1$, respectively $G_2$, adding an edge that creates a minimal cycle $C_{m-1}$. This implies that, by inductive hypotheis, $\chi(\tilde{G_i},t)=\frac{\chi(G_i,t)\chi(C_{m-1},t)}{t(t-1)^{m-2}}$, for $i=1,2$. By Theorem~\ref{Theo:delcontr} and Theorem~\ref{Theo:sumgraphnopath}, we obtain
$$\chi(G',t)=\chi(G'\setminus e_0,t)-\chi(G'/e_0,t)=\frac{\chi(G_1,t)\chi(G_2,t)}{t(t-1)^{m-3}}-\frac{\chi(\tilde{G_1},t)\chi(\tilde{G_2},t)}{\chi(C_{m-1},t)}=$$
$$\frac{\chi(G_1,t)\chi(G_2,t)}{t(t-1)^{m-3}}-\frac{\chi(G_1,t)\chi(G_2,t)\chi(C_{m-1},t)^2}{\chi(C_{m-1},t)t^2(t-1)^{2m-4}}=$$
$$\frac{\chi(G_1,t)\chi(G_2,t)}{t(t-1)^{m-3}}-\frac{\chi(G_1,t)\chi(G_2,t)\sum_{i=0}^{m-3}(-1)^i(t-1)^{m-3-i}}{t(t-1)^{m-3}(t-1)^{m-2}}=$$
$$=\frac{\chi(G,t)\sum_{i=0}^{m-2}(-1)^i(t-1)^{m-2-i}}{(t-1)^{m-2}}=\frac{\chi(G,t)\chi(C_m,t)}{t(t-1)^{m-1}}.$$

\end{proof}

\begin{Remark}[cf. Remark~\ref{rem:cutvertextopath}] The assumption in Theorem~\ref{Theo:addoneminimalcycle} that the vertices $v_3,\dots, v_m$ are cut vertices of $G$ is equivalent to require that $H$ is path-intersecting.
\end{Remark}

\begin{Remark} By the previous Remark and Theorem~\ref{Theo:charinducedsub}, the assumptions in Theorem~\ref{Theo:addoneminimalcycle} that the vertices $v_3,\dots, v_m$ are cut vertices of $G$ and that $H\cong C_m$ imply that $t(t-1)^{m-1}$ divides $\chi(G,t)$, and hence the formula described in the Theorem gives us a polynomial.
\end{Remark}

We can now use Theorems~\ref{Theo:charinducedsub}, \ref{Theo:charnonewcycles} and \ref{Theo:addoneminimalcycle}, to describe how to compute the chromatic polynomial of any simple graph in an inductive way.

Let $G=(V,E)$ be a simple graph with $n=|V|$. By Theorem~\ref{Theo:chromconnectcomp}, we can assume that $G$ is connected and let $T$ be a spanning tree of $G$. By Proposition~\ref{prop:chartree}, 
$\chi(T,t)=t(t-1)^{n-1}$. Now $G$ is obtained from $T$ by adding the $|E|-n+1$ remaining edges. We first add all the edges that create one single new cycle and use Theorem~\ref{Theo:charnonewcycles} to compute the chromatic polynomial of the new graph. Finally, we then add the remaining edges that create two or more cycles at the same time and use Theorem~\ref{Theo:charinducedsub} to compute the chromatic polynomial of the new graph.

\begin{Example}\label{Ex:exindconstruction1} Consider the graph $G$ on the left in Figure~\ref{Fig:exindconstruction1}. We can compute its chromatic polynomial using our inductive approach. Consider its spanning tree $T$ on the right in Figure~\ref{Fig:exindconstruction1}. The chromatic polynomial of $T$ is $\chi(T,t)=t(t-1)^4$. We now obtain $G$ by adding first the edge $\{3,5\}$, then $\{1,4\}$ and finally $\{2,4\}$. In this way, in each step we add one single $3$-cycle and hence we change the chromatic polynomial three times by multiplying a factor of $\frac{t-2}{t-1}$, obtaining that $\chi(G,t)=t(t-1)(t-2)^3$.

Notice that we could decide to follow a different order. For example, we could first add the edge $\{2,4\}$ that creates one $4$-cycle and obtained a new chromatic polynomial $t(t-1)^2(t^2-3t+3)$. Then we could add the edge $\{1,4\}$ that divides the $4$-cycle into two $3$-cycles and hence obtaining the chromatic polynomial $t(t-1)^2(t-2)^2$. Finally, adding the remaining edge gives us the chromatic polynomial we already computed before, since the edge $\{3,5\}$ add only one $3$-cycle.
\begin{figure}[t]
\centering
\begin{tikzpicture}
\draw (  0,  2) node[v,label=left:{$ 1 $}](1){};
\draw (2,  2) node[v,label=right:{$ 2 $}](2){};
\draw (  0,  0) node[v,label=left:{$ 3 $}](3){};
\draw (  2,0) node[v,label=right:{$ 4 $}](4){};
\draw (  -1,1) node[v,label=left:{$ 5 $}](5){};
\draw (1)--(2)--(4)--(3)--(1);
\draw (1)--(4);
\draw (1)--(5);
\draw (3)--(5);
\end{tikzpicture}
\hspace{10mm}
\begin{tikzpicture}
\draw (  0,  2) node[v,label=left:{$ 1 $}](1){};
\draw (2,  2) node[v,label=right:{$ 2 $}](2){};
\draw (  0,  0) node[v,label=left:{$ 3 $}](3){};
\draw (  2,0) node[v,label=right:{$ 4 $}](4){};
\draw (  -1,1) node[v,label=left:{$ 5 $}](5){};
\draw (1)--(2);
\draw (1)--(3);
\draw (1)--(5);
\draw (3)--(4);
\end{tikzpicture}
\caption{A drawing on the left of the graph $G$ of Example~\ref{Ex:exindconstruction1} with one of its spanning tree on the right}\label{Fig:exindconstruction1}
\end{figure}
\end{Example}

\section{Hyperplane arrangements}

Let $K$ be a field. A finite set of affine hyperplanes $\A =\{H_1, \dots, H_m\}$ in $K^n$ is called a \textbf{hyperplane arrangement}. For more details on hyperplane arrangements, see \cite{orlterao, DimcaIntro}.
For each hyperplane $H_i$ we fix a polynomial $\alpha_i\in S= K[x_1,\dots, x_n]$ such that $H_i = \alpha_i^{-1}(0)$, 
and let $Q(\A)=\prod_{i=1}^n\alpha_i$. An arrangement $\A$ is called \textbf{central} if each $H_i$ contains the origin of $K^n$. 
In this case, each $\alpha_i$ is a linear homogeneous polynomial, and hence $Q(\A)$ is homogeneous of degree $m$. 

Define the \textbf{lattice of intersections} of $\A$ by
$$L(\A)=\{\bigcap_{H\in\mathcal{B}}H\ne\emptyset \mid \mathcal{B}\subseteq\A\},$$
where if $\mathcal{B}=\emptyset$, we identify $\bigcap_{H\in\mathcal{B}}H$ with $K^n$.
 We endow $L(\A)$ with a partial order defined by $X\le Y$ if and only if $Y\subseteq X$, for all $X,Y\in L(\A)$. 
Note that this is the reverse inclusion. Define a rank function on $L(\A)$ by $\rk(X)=\codim(X)$. 
Moreover, we define $\rk(\A)=\codim(\bigcap_{H\in\mathcal{A}}H)$.
$L(\A)$ plays a fundamental role in the study of hyperplane arrangements, in fact it determines the combinatorics of the arrangement.
Let $$L^k(\A)=\{X\in L(\A)~|~\rk(X)=k\},$$ we call $\A$ \textbf{essential} if $L^n(\A)\ne\emptyset$.

Let $\mu\colon L(\A)\to\mathbb{Z}$ be the \textbf{M\"obius function} of $L(\A)$ defined by
$$\mu(X)=
\begin{cases}
      1 & \text{for } X=K^n,\\
      -\sum_{Y<X}\mu(Y) & \text{if } X>K^n.
\end{cases}$$

The \textbf{characteristic polynomial} of $\A$ is $$\chi(\A,t) = \sum_{X\in L(\A)}\mu(X)t^{\dim(X)}.$$ 

\begin{Proposition}\label{prop:charpoly2dim} Let $\A$ be a central hyperplane arrangement in $K^2$ with $m$ hyperplanes. Then
$$\chi(\A,t)= t^2-mt+(m-1).$$
\end{Proposition}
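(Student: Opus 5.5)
Since $\A$ is central in $K^2$, the lattice $L(\A)$ can be listed explicitly, and then the definitions of $\mu$ and $\chi(\A,t)$ give the formula directly. We assume, as the formula implicitly does, that $m\ge 1$ and the hyperplanes are distinct.

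First we describe $L(\A)$ rank by rank. Rank $0$ consists only of $K^2$. Rank $1$ consists of the $m$ distinct lines $H_1,\dots,H_m$ through the origin. Because $\A$ is central, every nonempty intersection of at least two distinct lines is exactly the origin $\{0\}$. So if $m\ge 2$, rank $2$ consists of the single element $\{0\}$, and there are no other elements.

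Next we compute the M\"obius function from its recursive definition. We have $\mu(K^2)=1$. For each line, $\mu(H_i)=-\mu(K^2)=-1$. For the origin, every other element of the lattice lies below it, so
$$\mu(\{0\})=-\Big(\mu(K^2)+\sum_{i=1}^m\mu(H_i)\Big)=-(1-m)=m-1.$$

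Now we substitute into the definition of $\chi(\A,t)$, using $\dim K^2=2$, $\dim H_i=1$ and $\dim\{0\}=0$:
$$\chi(\A,t)=t^2-mt+(m-1).$$

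The only point needing care is the case $m=1$, where $\{0\}$ is not in $L(\A)$. There $\chi(\A,t)=t^2-t$, which agrees with the formula because $m-1=0$. No other step presents an obstacle.
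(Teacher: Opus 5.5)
Your proof is correct, but it takes a different route from the paper. The paper proves the proposition in one line by quoting two results from the theory of free arrangements. The first is that every central arrangement of $m$ lines in $K^2$ is free with exponents $(1,m-1)$. The second is Terao's factorization theorem, which then gives $\chi(\A,t)=(t-1)(t-(m-1))=t^2-mt+(m-1)$.

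You instead read $\chi(\A,t)$ directly off the definitions: you list $L(\A)$ rank by rank and run the M\"obius recursion. This uses only the fact that two distinct lines through the origin of $K^2$ meet exactly in $\{0\}$. Your argument is self-contained and clearly valid over any field $K$. The paper's argument depends on nontrivial external results (freeness of rank-two arrangements and Terao's theorem). In exchange, it places the formula inside the framework of free arrangements and yields the factored form immediately.

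Your boundary cases are also handled correctly. For $m=1$ the point $\{0\}$ is absent from $L(\A)$, but the formula still holds because $m-1=0$. For $m=0$ one gets $\chi(\A,t)=t^2\neq t^2-1$, so the hypothesis $m\ge 1$ you added is genuinely needed. It is implicit in the paper too, since the exponents $(1,m-1)$ make no sense for $m=0$.
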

\begin{proof} This follows from the fact that $\A$ is free with exponents $(1,m-1)$, see \cite{orlterao, PalTor} and Terao's factorization Theorem \cite[Theorem 4.137]{orlterao}.
\end{proof}

The importance of the characteristic polynomial in combinatorics is justified by the following result 
from \cite{craporota}, \cite{orliksolomon} and \cite{zaslavsky1975}.

\begin{Theorem}\label{Theo:countingchamb}
We have
\begin{enumerate}
\item If $\A$ is an arrangement in $\mathbb{F}_p^n$, then $|\mathbb{F}^l_p\setminus \bigcup_{H\in\A}H|=\chi(\A, p)$.
\item If $\A$ is an arrangement in $\mathbb{C}^n$, then the topological $i$-th Betti number of the complement is $b_i(\mathbb{C}^n  \setminus \bigcup_{H\in\A}H)=b_i(\A)$.
\item If $\A$ is an arrangement in $\mathbb{R}^n$, then $|\chi(\A,-1)|$ is the number of chambers and $|\chi(\A, 1)|$ is the number of bounded chambers, where a chamber is a connected component of $\mathbb{R}^\ell\setminus \A$.
\end{enumerate}
\end{Theorem}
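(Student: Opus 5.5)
The statement is Theorem~\ref{Theo:countingchamb}, which collects three classical results due to Crapo--Rota, Orlik--Solomon and Zaslavsky. The paper only cites these, so the plan is to sketch the standard arguments. All three parts rest on one tool: M\"obius inversion on $L(\A)$ applied to a function that is additive over the strata
$$X^\circ = X\setminus\bigcup_{Y>X}Y, \qquad X\in L(\A).$$
Every point of $K^n$ lies in exactly one stratum, namely the one indexed by the intersection of all hyperplanes containing it. Hence for each $Y\in L(\A)$ we have the disjoint decomposition $Y=\bigsqcup_{X\ge Y}X^\circ$.

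For part (1), take $K=\mathbb{F}_p$ and set $f(Y)=|Y|=p^{\dim Y}$ and $g(X)=|X^\circ|$.
\begin{itemize}
\item The decomposition gives $f(Y)=\sum_{X\ge Y}g(X)$.
\item M\"obius inversion on the finite poset $L(\A)$ gives $g(Y)=\sum_{X\ge Y}\mu(Y,X)f(X)$.
\item Specialize to $Y=K^n$. Then $\mu(K^n,X)$ is exactly the function $\mu(X)$ defined recursively in the paper, because the recursion $\mu(X)=-\sum_{Y<X}\mu(Y)$ is the defining recursion of $\mu(\hat 0,X)$.
\item Therefore $|\mathbb{F}_p^n\setminus\bigcup_{H\in\A}H| = g(K^n)=\sum_X\mu(X)p^{\dim X}=\chi(\A,p)$.
\end{itemize}

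For part (3) I would follow Zaslavsky and use the Euler characteristic with compact supports $\chi_c$ on $\mathbb{R}^n$.
\begin{itemize}
\item $\chi_c$ is additive on disjoint locally closed pieces.
\item $\chi_c(\mathbb{R}^d)=(-1)^d$.
\item Each stratum $X^\circ$ is a disjoint union of open faces. Each face is an open cell of dimension $\dim X$, so it contributes $(-1)^{\dim X}$.
\end{itemize}
Run the same inversion with $f(Y)=\chi_c(Y)=(-1)^{\dim Y}$. This gives
$$\chi_c\Bigl(\mathbb{R}^n\setminus\bigcup_{H\in\A}H\Bigr)=\chi(\A,-1).$$
The complement is a disjoint union of open $n$-cells, so its $\chi_c$ equals $(-1)^n$ times the number of chambers. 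Taking absolute values gives the chamber count.

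For bounded chambers, apply the same argument with the ordinary Euler characteristic on closed regions. Equivalently, compute $\chi_c$ after compactifying by a large ball and count which faces are bounded. This yields $(-1)^{\rk}\chi(\A,1)$ as the number of bounded regions. The sign bookkeeping here is routine; the real input is that bounded chambers are exactly the regions whose closure avoids the sphere at infinity.

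Part (2) is the Orlik--Solomon theorem and is the main obstacle; it does not follow from counting alone. The plan has three steps.
\begin{enumerate}
\item Show that the rational cohomology ring of $M=\mathbb{C}^n\setminus\bigcup H$ is generated by the closed forms $\frac{1}{2\pi i}\,d\alpha_H/\alpha_H$, and that it is isomorphic to the Orlik--Solomon algebra $A(\A)$. This uses Brieskorn's lemma together with deletion--restriction: the triple $(\A,\A\setminus H,\A^H)$ gives a Gysin long exact sequence for $M(\A\setminus H)\supset M(\A)$, and this sequence splits into short exact sequences.
\item Show by the same triple that the Poincar\'e polynomial of $A(\A)$ obeys the deletion--restriction recursion.
\item The characteristic polynomial obeys the matching recursion $\chi(\A,t)=\chi(\A\setminus H,t)-\chi(\A^H,t)$. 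Comparing the two recursions by induction on $|\A|$ gives $\sum_i b_i(M)(-t)^{i}t^{n-i}=\chi(\A,t)$.
\end{enumerate}
The identity in step 3 is precisely the statement $b_i(M)=b_i(\A)$, where $b_i(\A)$ is read off from the coefficients of $\chi(\A,t)$. The hardest step is proving that the Gysin sequence splits, i.e.\ that restriction $H^*(M(\A\setminus H))\to H^*(M(\A))$ is injective. I would obtain this from the explicit generation by logarithmic forms, by constructing a splitting at the level of the algebra $A(\A)$.
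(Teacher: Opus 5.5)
The paper gives no proof of this theorem; it only cites Crapo--Rota, Orlik--Solomon and Zaslavsky, so your proposal is a reconstruction of those sources. Your proof of (1), by M\"obius inversion over the strata $X^\circ$, is complete. Your chamber count in (3) via $\chi_c$ is correct. Your outline of (2), through the Gysin sequence of $(\A,\A\setminus H,\A^H)$, is the standard route.

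For (2) you do not need the ring isomorphism with $A(\A)$. By Brieskorn's lemma, $H^*(M(\A^H))$ is generated by the restrictions $\omega_K|_H$, where $\omega_K=\frac{1}{2\pi i}d\alpha_K/\alpha_K$. The residue of $\omega_H\wedge\omega_{K_1}\wedge\cdots\wedge\omega_{K_{p-1}}$ along $H$ is $\pm\,\omega_{K_1}|_H\wedge\cdots\wedge\omega_{K_{p-1}}|_H$. Hence the residue map $H^p(M(\A))\to H^{p-1}(M(\A^H))$ is onto. This kills the Gysin connecting map, which is exactly the injectivity you need.

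The genuine gap is the bounded-chamber claim in (3), which you call routine sign bookkeeping. As stated it is false for non-essential arrangements. Take the two parallel lines $x=0$, $x=1$ in $\mathbb{R}^2$: then $\chi(\A,t)=t^2-2t$, so $|\chi(\A,1)|=1$, yet all three chambers are unbounded.

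Your own method shows where this enters:
\begin{enumerate}
\item Take $B$ a large closed ball. Then $f(Y)=\chi_c(Y\cap B)=1$ for every flat $Y$.
\item Inversion gives $\chi(\A,1)=\sum_R\chi_c(R\cap B)$.
\item Each term is $\chi_c(R\cap B)=\chi(\bar R\cap B)-\chi(\partial R\cap B)=1-\chi(\partial R\cap B)$.
\end{enumerate}
Write $R=R_0\times L$ with $L$ the lineality space. If $R_0$ is unbounded, $R$ contributes $0$, because $\partial R\cap B\simeq\partial R$ is contractible. If $R_0$ is bounded, $\partial R\cap B\simeq\partial R_0$ is a sphere of dimension $n-\dim L-1$, so $R$ contributes $(-1)^{n-\dim L}\neq 0$. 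The middle strip above contributes $-1$ even though its closure meets the sphere at infinity.

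So your argument proves that $(-1)^{\rk\A}\chi(\A,1)$ counts the \emph{relatively} bounded chambers, with $\rk\A$ the dimension of the span of the normals. To obtain the statement as written you must assume $\A$ is essential; for nonempty central arrangements both sides are $0$. The step that needs an argument is the evaluation of $\chi(\partial R\cap B)$, not the characterization of bounded chambers by the sphere at infinity.
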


Following the previous Theorem, if $\A$ is an arrangement in $\mathbb{R}^n$ we will denote by $c(\A)$, respectively $bc(\A)$, the number of its chambers, respectively its bounded chambers.

For any $X\in L(\A)$ define the subarrangement $\A_X$ of $\A$ by 
$$\A_X=\{H\in\A~|~X\subseteq H\}.$$
Similarly, define the \textbf{restriction} of $\A$ to $X$ as the arrangement $\A^X$ in $X$
$$\A^X=\{X\cap H~|~H\in\A\setminus\A_X \text{ and } X\cap H\ne\emptyset\}.$$
Notice that if $H$ is an hyperplane in $\A$, then $H\cong K^{n-1}$. This implies that $A^H\subseteq K^{n-1}$ for any $H\in\A$.

Given a simple graph $G=(V,E)$ on vertex set $ V=\{1, \dots, n \}$, we can associate to it the arrangement
$$\mathcal{A}(G)= \{\{x_{i}-x_{j}=0\} ~|~ \{i,j\}\in E\}\subseteq K^n. $$
The arrangement $ \mathcal{A}(G) $ is called a \textbf{graphic arrangement}.  A graph and its associated graphic arrangement
share a lot of combinatorial invariants. In particular, we have the following.

\begin{Theorem}[\cite{zaslavsky2012}]\label{Theo:charchrompoly} Let $G=(V,E)$ be a simple graph, $e=\{i,j\}\in E$ and $H=\{x_i-x_j=0\}\in\A(G)$. Then 
\begin{enumerate}
\item $\A(G\setminus e)=\A(G)\setminus\{H\}$.
\item $\A(G/ e)=\A(G)^H$.
\item $\chi(G,t)=\chi(\mathcal{A}(G),t)$.
\end{enumerate}
\end{Theorem}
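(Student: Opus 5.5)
The plan is to prove the three items in order: (1) and (2) directly from the definitions, and (3) by induction on $|E|$ via deletion--contraction, using (1) and (2) to match the two recursions. Item (1) is immediate. The hyperplanes of $\A(G\setminus e)$ are $\{x_k-x_l=0\}$ for $\{k,l\}\in E\setminus\{e\}$. These are exactly the hyperplanes of $\A(G)$ other than $H$, since distinct edges give distinct hyperplanes.

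For (2), identify $H=\{x_i=x_j\}$ with $K^{n-1}$ via coordinates indexed by $V$ with $i$ and $j$ merged into a single vertex $u$. This is exactly the vertex set of $G/e$. Take a hyperplane $\{x_k-x_l=0\}$ with $\{k,l\}\neq e$. Its intersection with $H$ is $\{x_{\bar k}-x_{\bar l}=0\}$, where $\bar{\ }$ denotes the merging map. These intersections are precisely the hyperplanes coming from the edges of $G/e$, with coinciding ones counted once. Since $G/e$ is taken simple, parallel edges collapse, matching the set-theoretic restriction. The only point to check is that no such intersection equals $H$ itself or is empty. This holds because $\{\bar k,\bar l\}$ has $\bar k\ne\bar l$ whenever $\{k,l\}\ne\{i,j\}$. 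Such a hyperplane is therefore not in $\A_H$, and its intersection with $H$ is a genuine hyperplane of $H$.

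For (3), argue by induction on $|E|$. The base case $E=\emptyset$ gives the empty arrangement, with $\chi=t^n$, and $\chi(G,t)=t^n$ since each vertex may be colored freely (Theorem~\ref{Theo:chromconnectcomp}). For the inductive step, pick $e\in E$. The key ingredient is the deletion--restriction formula for arrangements,
$$\chi(\A,t)=\chi(\A\setminus\{H\},t)-\chi(\A^H,t),$$
which is standard \cite{orlterao}. Combining it with (1), (2), the induction hypothesis applied to $G\setminus e$ and $G/e$ (both with fewer edges), and Theorem~\ref{Theo:delcontr} gives $\chi(\A(G),t)=\chi(G\setminus e,t)-\chi(G/e,t)=\chi(G,t)$.

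If one wants to avoid citing deletion--restriction, an alternative is to work over $K=\mathbb{F}_p$ for large primes $p$. By Theorem~\ref{Theo:countingchamb}(1), $\chi(\A(G),p)$ counts the points of $\mathbb{F}_p^n$ with $x_k\ne x_l$ for all edges $\{k,l\}$, which are exactly the proper $p$-colorings of $G$. Two polynomials that agree at infinitely many primes are equal. This requires that the intersection lattice, and hence $\chi$, of a graphic arrangement be independent of the field, which holds since the lattice is the lattice of flats of the graph. I expect the main obstacle to be this bookkeeping: either justifying deletion--restriction from the Möbius-function definition, or justifying field independence. I would take the deletion--restriction route, citing it from \cite{orlterao}.
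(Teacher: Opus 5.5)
Your proof is correct. The paper gives no argument to compare it with: the statement is quoted from \cite{zaslavsky2012} and used as a black box, so your write-up is a self-contained replacement for that citation.

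Your deletion--restriction induction is the standard route, and it is not circular. The paper's remark right after the statement runs the implication the other way, from the theorem to deletion--contraction for $\A(G)$. But Theorem~\ref{Theo:delrestrarr} is proved in \cite{orlterao} for arbitrary arrangements directly from the M\"obius function, independently of graphs.

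One point deserves to be made explicit. Item (2) is an equality only after transporting $\A(G)^H$ from $H$ to $K^{n-1}$ by the coordinate isomorphism. What the induction really uses is $\chi(\A(G)^H,t)=\chi(\A(G/e),t)$. This holds because a linear isomorphism preserves the intersection lattice together with the dimensions of its elements.

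Your finite-field alternative is also sound. The field-independence you flag has a short proof: for $F\subseteq E$, the flat $\bigcap_{\{k,l\}\in F}\{x_k=x_l\}$ is the space of vectors constant on the connected components of $(V,F)$. Its dimension is the number of components over any field. That route gives (3) directly, without induction and without (1) and (2). The deletion--restriction route instead obtains (3) as a consequence of (1) and (2), which matches how the theorem is packaged.
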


The previous Theorem also implies that the characteristic polynomial of a graphic arrangement satisfies the deletion contraction formula of Theorem~\ref{Theo:delcontr}.
However, there is a more general formulation that we will need later.

\begin{Theorem}[\cite{orlterao}]\label{Theo:delrestrarr} Let $\A$ be an arrangement in $K^n$ and $H$ a hyperplane of $\A$. Then
$$\chi(\A,t)=\chi(\A\setminus H,t)-\chi(\A^H,t).$$
\end{Theorem}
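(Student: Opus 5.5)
The plan is to work directly from the definition $\chi(\A,t)=\sum_{X\in L(\A)}\mu(X)t^{\dim X}$. I will use Whitney's formula for the M\"obius function of an intersection lattice:
$$\mu(X)=\sum_{\mathcal{B}\subseteq\A,\ \bigcap\mathcal{B}=X}(-1)^{|\mathcal{B}|}.$$
This gives the subset expansion
$$\chi(\A,t)=\sum_{\mathcal{B}\subseteq\A,\ \bigcap\mathcal{B}\neq\emptyset}(-1)^{|\mathcal{B}|}t^{\dim\bigcap\mathcal{B}}.$$
Whitney's formula itself follows by checking that the right-hand side satisfies the defining recursion. For fixed $X$, summing over $Y\le X$ counts all $\mathcal{B}$ with $X\subseteq\bigcap\mathcal{B}$, that is, all subsets of $\A_X$. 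This sum is $(1-1)^{|\A_X|}=0$ whenever $X\ne K^n$, and equals $1$ at $X=K^n$. This matches the definition of $\mu$.

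Next I split the subsets $\mathcal{B}$ according to whether $H\in\mathcal{B}$.

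The subsets with $H\notin\mathcal{B}$ are exactly the subsets of $\A\setminus H$ with nonempty intersection. Their total contribution is $\chi(\A\setminus H,t)$.

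The subsets with $H\in\mathcal{B}$ have the form $\mathcal{B}=\{H\}\cup\mathcal{C}$ with $\mathcal{C}\subseteq\A\setminus\{H\}$. Their contribution is
$$-\sum_{\mathcal{C}}(-1)^{|\mathcal{C}|}t^{\dim(H\cap\bigcap\mathcal{C})}.$$
I will identify this sum with $-\chi(\A^H,t)$, computed inside $H\cong K^{n-1}$. Consider the map $\mathcal{C}\mapsto\{H\cap K : K\in\mathcal{C}\}$. Applying Whitney's formula to $\A^H$, it suffices to show the following: for each $Y\in L(\A^H)$, the signed count of the $\mathcal{C}$ with $H\cap\bigcap\mathcal{C}=Y$ equals the signed count of the subsets $\mathcal{D}\subseteq\A^H$ with $\bigcap\mathcal{D}=Y$.

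This identification is the main obstacle, because the map from $\mathcal{C}$ to subsets of $\A^H$ is not injective. First, hyperplanes $K\in\A_H\setminus\{H\}$ would have $K\cap H=H$; in fact $\A_H=\{H\}$, since distinct hyperplanes are distinct sets. Second, several hyperplanes $K$ may have the same trace $K\cap H$. Third, some $K$ may be parallel to $H$, so that $K\cap H=\emptyset$.

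I will treat these cases separately. Any $\mathcal{C}$ containing a $K$ parallel to $H$ has empty total intersection, so it does not appear in the sum. For the other hyperplanes, fix a trace $L=K\cap H$ and let $F_L$ be the fibre of hyperplanes with that trace. A choice of $\mathcal{C}$ restricted to $F_L$ either picks nothing, contributing sign $1$, or picks some nonempty subset with trace $L$. The signed sum over the nonempty subsets is
$$\sum_{\emptyset\ne S\subseteq F_L}(-1)^{|S|}=-1,$$
which is exactly the sign of choosing the single element $L\in\mathcal{D}$. Multiplying over fibres gives the required equality of signed counts. Combining the two parts yields $\chi(\A,t)=\chi(\A\setminus H,t)-\chi(\A^H,t)$.
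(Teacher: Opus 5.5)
Your proof is correct and complete. The paper gives no argument of its own for this statement; it is simply quoted from \cite{orlterao}. So the relevant comparison is with the standard textbook proof, and that is the route you take:
\begin{itemize}
\item expand $\chi$ by Whitney's formula as a signed sum over subsets $\mathcal{B}$ with $\bigcap\mathcal{B}\neq\emptyset$;
\item split according to whether $H\in\mathcal{B}$;
\item identify the part with $H\in\mathcal{B}$ with $-\chi(\A^H,t)$.
\end{itemize}

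Your check of Whitney's formula via $(1-1)^{|\A_X|}$ is right. Your fibre count also correctly handles the non-injectivity of $\mathcal{C}\mapsto\{H\cap K : K\in\mathcal{C}\}$:
\begin{itemize}
\item any hyperplane with $K\cap H=\emptyset$ makes the total intersection empty, so the term drops out;
\item the fibres $F_L$ partition the remaining hyperplanes of $\A\setminus\{H\}$;
\item every $\mathcal{C}$ lying over a given $\mathcal{D}$ satisfies $H\cap\bigcap\mathcal{C}=\bigcap\mathcal{D}$, so all such $\mathcal{C}$ contribute the same power of $t$;
\item each $L\in\mathcal{D}$ contributes $\sum_{\emptyset\neq S\subseteq F_L}(-1)^{|S|}=-1$.
\end{itemize}

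If you prefer to avoid the fibre bookkeeping, there is an equivalent finish. First observe that $L(\A^H)=\{Y\in L(\A): Y\subseteq H\}$ as posets, with minimum $H$. Then check that $Y\mapsto-\sum_{H\in\mathcal{B},\ \bigcap\mathcal{B}=Y}(-1)^{|\mathcal{B}|}$ satisfies the M\"obius recursion on this poset. The check is the same kind of computation you used for Whitney's formula: $\sum_{\mathcal{C}\subseteq\A_Y\setminus\{H\}}(-1)^{|\mathcal{C}|}=(1-1)^{|\A_Y|-1}$, which vanishes unless $Y=H$.

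Either version yields the pointwise identity $\mu_{\A}(Y)=\mu_{\A\setminus H}(Y)-\mu_{\A^H}(Y)$, with each term taken to be $0$ outside its own lattice. Multiplying by $t^{\dim Y}$ and summing over $Y$ gives the theorem.
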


\section{Orlik--Solomon algebra}
Let $\A=\{H_1, \dots, H_m\}$ be an arrangement of hyperplanes in $\mathbb{C}^n$.
Let $E^1=\bigoplus_{j=1}^m\mathbb{C} e_j$ be the free module generated by $e_1, e_2, \dots, e_m$, where $e_i$ is a symbol corresponding to the hyperplane $H_i$.
Let $E=\bigwedge E^1$ be the exterior algebra over $\mathbb{C}$. The algebra $E$ is graded via $E=\bigoplus_{p=0}^mE^p$, where $E^p=\bigwedge^pE^1$.
The $\mathbb{C}$-module $E^p$ is free and has the distinguished basis consisting of monomials $e_S:=e_{i_1}\wedge\cdots\wedge e_{i_p}$,
where $S=\{{i_1},\dots, {i_p}\}$ runs through all the subsets of $\{1,\dots,m\}$ of cardinality $p$ and $i_1<i_2<\cdots<i_p$.
The graded algebra $E$ is a commutative DGA with respect to the differential $\partial$ of degree $-1$ uniquely defined
by the conditions $\partial e_i=1$ for all $i=1,\dots, m$ and the graded Leibniz formula. Then for every $S\subseteq\{1,\dots,m\}$ of cardinality $p$
$$\partial e_S=\sum_{j=1}^p(-1)^{j-1}e_{S_j},$$
where $S_j$ is the complement in $S$ to its $j$-th element.

For every $S\subseteq\{1,\dots,m\}$, put $\cap S=\bigcap_{i\in S}H_i$ (possibly $\cap S=\emptyset$). 
The subset $S\subseteq\{1,\dots,m\}$ is called \textbf{dependent} if $\cap S\ne\emptyset$
and the set of linear polynomials $\{\alpha_i~|~i\in S\}$ with $H_i=\alpha_i^{-1}(0)$, is linearly dependent.
\begin{Definition}\label{def:osalgbr}
The \textbf{Orlik--Solomon ideal} of $\A$ is the ideal $I=I(\A)$ of $E$ generated by
\begin{enumerate}
\item all $e_S$ with $\cap S=\emptyset$,
\item all $\partial e_S$ with $S$ dependent.
\end{enumerate}
The algebra $A:=A^\bullet(\A)=E/I(\A)$ is called the \textbf{Orlik--Solomon algebra} of $\A$.
\end{Definition}
Clearly $I$ is a homogeneous ideal of $E$ and $I^p=I\cap E^p$ whence $A$ is a graded algebra and we can write $A=\bigoplus_{p\ge 0} A^p$, where $A^p=E^p/I^p$.
If $\A$ is central, then for any $S\subseteq\A$, we have $\cap S\neq\emptyset$. Therefore, the Orlik--Solomon ideal is generated
by the elements of type $(2)$ from Definition \ref{def:osalgbr}.
In this case, the map $\partial$ induces a well-defined differential $\partial\colon A^\bullet(\A)\longrightarrow A^{\bullet -1}(\A)$.

Notice that since $n+k$ hyperplanes in $\mathbb{C}^n$ are dependent for any $k\ge 1$, we have that $A^{n+k}=\{0\}$ for any $k\ge 1$.

The Orlik--Solomon algebra gives us an alternative way to compute the characteristic polynomial.
\begin{Theorem}[\cite{orlterao}]\label{Th:oscharpoly} Let $\A$ be an arrangement in $K^n$. Then the coefficient of $t^{n-k}$ in $\chi(\A,t)$ is $(-1)^k\dim_{\mathbb{C}}(A^k)$, for all $k=0,\dots, n$.
\end{Theorem}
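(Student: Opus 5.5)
We prove the statement by induction on the number $m$ of hyperplanes, comparing the deletion--restriction recursion of Theorem~\ref{Theo:delrestrarr} for $\chi(\A,t)$ with an analogous recursion for the dimensions of the graded pieces of the Orlik--Solomon algebra. Write $a_k(\A)=\dim_{\mathbb{C}}A^k(\A)$ and $c_k(\A)$ for the coefficient of $t^{n-k}$ in $\chi(\A,t)$, where $n$ is the dimension of the ambient space. The claim is then $c_k(\A)=(-1)^k a_k(\A)$ for all $k$. The base case is the empty arrangement. There $L(\A)=\{K^n\}$, so $\chi(\A,t)=t^n$, and $A=E=\mathbb{C}$ is concentrated in degree $0$; the claim holds.

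For the inductive step, fix $H=H_m\in\A$ and set $\A'=\A\setminus\{H\}$ and $\A''=\A^H$, an arrangement in $H\cong K^{n-1}$. Comparing coefficients of $t^{n-k}$ in Theorem~\ref{Theo:delrestrarr} gives
$$c_k(\A)=c_k(\A')-c_{k-1}(\A''),$$
where the index shift comes from $\chi(\A'',t)$ having degree $n-1$. Since $\A'$ and $\A''$ have fewer hyperplanes, the inductive hypothesis reduces the theorem to the identity
$$a_k(\A)=a_k(\A')+a_{k-1}(\A''),$$
which has exactly the right signs. We obtain this identity from a short exact sequence of graded vector spaces
$$0\longrightarrow A^k(\A')\xrightarrow{\ i\ } A^k(\A)\xrightarrow{\ j\ } A^{k-1}(\A'')\longrightarrow 0 .$$
Here $i$ is induced by the inclusion $E(\A')\subseteq E(\A)$. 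The map $j$ is a ``residue'' map. It sends $e_S$ to $0$ when $m\notin S$. When $S=S'\cup\{m\}$, it sends $e_S$ to $\pm e_{\lambda(S')}$, where $\lambda(i)$ denotes the hyperplane $H_i\cap H$ of $\A''$. If $\lambda$ is not injective on $S'$ we interpret this as $0$, and if $\cap S=\emptyset$ it is also $0$.

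The steps, in order, are as follows.
\begin{enumerate}
\item Show that $i(I(\A'))\subseteq I(\A)$ and $j(I(\A))\subseteq I(\A'')$, so that both maps are well defined on the quotients. For $j$ this means checking that the residue of each generator $e_S$ with $\cap S=\emptyset$, and of each $\partial e_S$ with $S$ dependent, lands in $I(\A'')$. Use the identity $\partial e_S = e_{S\setminus\{m\}}\pm e_m\wedge\partial e_{S\setminus\{m\}}$ together with the fact that dependence of $S\ni m$ forces dependence, or empty intersection, of $\lambda(S\setminus\{m\})$ inside $H$.
\item Check that $j$ is surjective, which is clear on monomials, and that $j\circ i=0$.
\item Show exactness in the middle and injectivity of $i$.
\end{enumerate}

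Step 3 is the main obstacle. The cleanest route is to use standard nbc (no-broken-circuit) bases. Order the hyperplanes so that $H$ is last. Then $A(\A)$ has a basis of nbc monomials $e_S$, and these split into two families. Those with $m\notin S$ are exactly the nbc monomials of $\A'$. Those with $m\in S$ correspond bijectively, under $S\mapsto\lambda(S\setminus\{m\})$, to the nbc monomials of $\A''$ for the induced order.

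If the existence of nbc bases may not be assumed, the fallback is to prove the spanning half directly: nbc monomials span, by rewriting broken circuits with the relations $\partial e_S$. Then combine this with the inequality $a_k(\A)\le a_k(\A')+a_{k-1}(\A'')$ that spanning yields, and with the surjectivity of $j$. A dimension count then forces equality. This count uses induction on $|\A|$ for the claim that $A(\A')$ embeds; that claim is the subtle point, since a priori relations in $\A$ could kill classes coming from $\A'$.

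With the exact sequence established, the identity $a_k(\A)=a_k(\A')+a_{k-1}(\A'')$ follows, and the induction closes.
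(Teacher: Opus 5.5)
The paper gives no proof of its own here; it only cites Orlik--Terao, so your proposal has to stand on its own. Your reduction is sound: comparing coefficients of $t^{n-k}$ in Theorem~\ref{Theo:delrestrarr} gives $c_k(\A)=c_k(\A')-c_{k-1}(\A'')$, and the identity $a_k(\A)=a_k(\A')+a_{k-1}(\A'')$ would close the induction with the right signs.

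There is a small slip in Step 1. The map $j$ is not multiplicative, so checking it on the generators of $I(\A)$ is not enough; you need $j(e_T\,g)\in I(\A'')$ for all monomials $e_T$. Write $g=g_0+e_m g_1$ with $g_0,g_1\in E(\A')$, and let $\lambda\colon E(\A')\to E(\A'')$ be the algebra map $e_i\mapsto e_{\lambda(i)}$ (or $0$). The condition then amounts to $\lambda(g_0),\lambda(g_1)\in I(\A'')$. In particular you must also show $\lambda(\partial e_S)\in I(\A'')$ for dependent $S\not\ni m$. This holds, and is easiest after replacing the generators by $\partial e_C$ for circuits $C$.

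The genuine gap is Step 3, specifically the lower bound $a_k(\A)\ge a_k(\A')+a_{k-1}(\A'')$. This is equivalent to injectivity of $i$, i.e.\ $I(\A)\cap E(\A')=I(\A')$.

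\emph{The nbc route assumes the result.} Your main route takes as given that nbc monomials form a basis of $A(\A)$. Once that is granted, your own bijection $\mathrm{nbc}_k(\A)\leftrightarrow\mathrm{nbc}_k(\A')\sqcup\mathrm{nbc}_{k-1}(\A'')$ gives the dimension identity by counting, and the exact sequence is superfluous. The real content of the theorem, linear independence of the nbc monomials, has been assumed rather than proved.

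\emph{The fallback does not fix this.} Spanning, or right exactness, gives only $a_k(\A)\le a_k(\A')+a_{k-1}(\A'')$. Surjectivity of $j$ gives only $a_k(\A)\ge a_{k-1}(\A'')$. The inductive hypothesis concerns $A(\A')$ and $A(\A'')$ alone. It says nothing about whether relations of $\A$ kill classes coming from $A(\A')$, so ``induction on $|\A|$'' does not yield the embedding.

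\emph{A way to get the lower bound.} You need an independent lower bound on $\dim A^k(\A)$. The following short argument supplies it and in fact proves the theorem without deletion--restriction.
\begin{enumerate}
\item $I$ is generated by the $\partial e_C$ for circuits $C$. Since $\cap C_j=\cap C$, every $e_T\,\partial e_C$ is homogeneous for the grading of $E$ by $\cap S\in L(\A)$.
\item Hence $A=\bigoplus_{X\in L(\A)}A_X$, with $A_X\subseteq A^{\rk X}$ and $A_X(\A)=A_X(\A_X)$.
\item For a nonempty central $\mathcal{B}$, $\partial$ preserves $I(\mathcal{B})$, and $a\mapsto e_Ha$ with $H\in\mathcal{B}$ is a contracting homotopy. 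So $(A(\mathcal{B}),\partial)$ is exact and has Euler characteristic $0$.
\item Taking $\mathcal{B}=\A_X$ for $X\neq K^n$ gives $\sum_{Y\le X}(-1)^{\rk Y}\dim A_Y=0$, while $\dim A_{K^n}=1$.
\item This is exactly the recursion defining $\mu$, so $\dim A_X=(-1)^{\rk X}\mu(X)$. Summing over $\rk X=k$ gives the statement.
\end{enumerate}
The affine case is the same: the monomials with empty intersection form one extra graded piece, contained in $I$.
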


\begin{Remark} Let $G$ be a simple graph and $\A(G)$ the associated graphic arrangement. Then there is a bijection between cycle in $G$ and dependent sets of $\A(G)$, and hence generators of the Orlik--Solomon ideal of $\A(G)$.
\end{Remark}

Thanks to this connection and Theorem~\ref{Th:oscharpoly}, we can prove the following combinatorial formulas for the coefficients of the chromatic polynomial of a simple graph. 

\begin{Theorem} Consider $G$ a graph with $n$ vertices and $m$ edges. Then $\chi(G,t)=\chi(\A(G),t)$ has the following properties.
\begin{enumerate}
\item It is a monic polynomial of degree $n$.
\item The coefficient of $t^{n-1}$ is $-m$.
\item The coefficient of $t^{n-2}$ is $\binom{m}{2}-k_3$, where $k_3$ is the number of subgraphs of $G$ isomorphic to $K_3$.
\item The coefficient of $t^{n-3}$ is $-[\binom{m}{3}-(m-2)k_3-c_4-k_4+g_0]$, where $c_4,k_4$ and $g_0$ are the number of subgraphs of $G$ isomorphic respectively to $C_4, K_4$ and $G_0$ (see Example~\ref{ex:defG0}).
\end{enumerate}
\end{Theorem}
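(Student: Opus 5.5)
We use Whitney's subgraph expansion of the chromatic polynomial rather than computing $\dim A^k$ directly. It has the same content as Theorem~\ref{Th:oscharpoly} (the $k$-forests with no broken circuits give a basis of $A^k$), but the counting is cleaner. For a subset $S\subseteq E$ let $c(S)$ be the number of connected components of the spanning subgraph $(V,S)$, and let $\rk(S)=n-c(S)$; this is exactly the rank in $L(\A(G))$ of $\bigcap_{\{i,j\}\in S}\{x_i-x_j=0\}$. The expansion is
$$\chi(G,t)=\sum_{S\subseteq E}(-1)^{|S|}t^{c(S)}.$$
We prove it by induction on $|E|$. The case $E=\emptyset$ is clear. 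For the inductive step, fix $e\in E$ and split the sum according to whether $e\in S$. The terms with $e\notin S$ give the expansion for $G\setminus e$. The terms with $e\in S$ correspond bijectively to subsets of the edges of $G/e$ with the same number of components, and they carry an extra sign $-1$. Theorem~\ref{Theo:delcontr} then closes the induction.

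One subtlety needs care. $G/e$ may acquire parallel edges, which the paper's simple-graph convention discards. The expansion still holds if we keep multiedges, and the alternating sum over parallel classes is compatible with collapsing them. Alternatively, we can run the same argument with Theorem~\ref{Theo:delrestrarr} on $\A(G)$ together with Theorem~\ref{Theo:charchrompoly}, which avoids the issue entirely.

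From the expansion, the coefficient of $t^{n-k}$ is $\sum_{\rk(S)=k}(-1)^{|S|}$. Only $S=\emptyset$ has rank $0$, which gives (1) together with the degree. Every edge has rank $1$, and two or more edges always have rank at least $2$ in a simple graph, which gives (2).

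For (3), the rank-$2$ sets are the $2$-element sets and the edge sets of triangles. A $2$-subset of edges is always a forest, and any set of $3$ or more edges of rank $2$ must span exactly $3$ vertices in one component, so it is a triangle. Three edges that form a path or a pair of disjoint edges plus one more have rank $3$. This gives $\binom{m}{2}-k_3$.

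For (4) we classify the edge sets of rank $3$ by size, noting that such a set spans at most $6$ vertices.
\begin{itemize}
\item $|S|=3$: all $\binom{m}{3}$ triples except the $k_3$ triangles.
\item $|S|=4$: a set with a cycle and rank $3$. Either it is a triangle plus any one of the remaining $m-3$ edges (no such edge lies in the span of the triangle, because the graph is simple), or it is a $4$-cycle. This gives $(m-3)k_3+c_4$.
\item $|S|=5$: the rank forces the set onto $4$ vertices with one component and $5$ edges, which is a copy of $G_0$. Two disjoint triangles would have rank $4$. This gives $g_0$.
\item $|S|=6$: a copy of $K_4$, giving $k_4$.
\end{itemize}
Summing with signs,
$$-\binom{m}{3}+k_3+(m-3)k_3+c_4-g_0+k_4=-\Bigl[\binom{m}{3}-(m-2)k_3-c_4-k_4+g_0\Bigr].$$
Here $c_4$, $g_0$ and $k_4$ count (not necessarily induced) subgraphs.

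The main obstacle is the exhaustiveness of the case analysis in (4). We must check that every rank-$3$ edge set with a cycle falls into exactly one of the listed types, with no double counting: a $4$-cycle with a chord is a $G_0$, not a triangle-plus-edge at size $4$. The argument for this is that $\rk(S)=3$ forces $|V(S)|-c(S)=3$, and together with the size bound this leaves only the configurations listed above.
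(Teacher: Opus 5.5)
Your proof is correct, and it takes a genuinely different route from the paper.

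\textbf{The paper's route.} The paper stays inside the Orlik--Solomon algebra and writes $\dim_{\mathbb{C}}A^k=\binom{m}{k}-\dim_{\mathbb{C}}I^k$. For $I^2$ it takes one generator $\partial e_T$ per triangle. For $I^3$ it lists the $(m-2)k_3$ products $e_i\,\partial e_T$ together with the elements $\partial e_S$ for dependent $4$-sets $S$. It then asserts, via an unspecified ``direct computation'', that the latter contribute exactly $c_4+k_4-g_0$ new dimensions modulo the former. Parts (1) and (2) are simply cited from Read.

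\textbf{Your route.} You use Whitney's expansion $\chi(G,t)=\sum_{S\subseteq E}(-1)^{|S|}t^{c(S)}$, so each coefficient becomes a signed count of edge sets of a given rank. No linear-independence questions in the exterior algebra arise. The corrections $-g_0$ and $+k_4$, which the paper hides inside the unverified rank computation for $I^3$, appear transparently as the signs of the rank-$3$ edge sets of sizes $5$ and $6$. Your classification of rank-$3$ sets by size is exhaustive and has no double counting. It rests on three facts: cycles have length at most $4$, a $4$-set contains exactly one cycle, and simplicity keeps a fourth edge out of a triangle's span.

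\textbf{What each buys.} The paper's approach ties the coefficients to $A^\bullet(\A(G))$, and hence to the Betti numbers of the complement. Yours is self-contained, also proves (1) and (2), and extends mechanically to the coefficients of $t^{n-k}$ for larger $k$, at the cost of a growing case analysis.

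\textbf{A small inaccuracy.} Your alternative via Theorem~\ref{Theo:delrestrarr} does not avoid the multi-edge issue. Distinct hyperplanes of $\A\setminus\{H\}$ can restrict to the same hyperplane of $H$; for example, $x_i-x_k$ and $x_j-x_k$ coincide on $x_i=x_j$. This is exactly the parallel-edge phenomenon. Your first route already handles it correctly: use $\sum_{\emptyset\neq Q\subseteq P}(-1)^{|Q|}=-1$ for each parallel class $P$, and apply the induction to the simplification of $G/e$, which has fewer edges than $G$. So nothing is lost.

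\textbf{A notational point.} In your final paragraph, $c(S)$ should denote the number of components of $(V(S),S)$, not of $(V,S)$, for the identity $|V(S)|-c(S)=3$ to hold as written.
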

\begin{proof}
We have the following 
\begin{enumerate}
\item See \cite{read}. 
\item See \cite{read}. 
\item By Theorem~\ref{Th:oscharpoly}, we only need to compute $\dim_{\mathbb{C}}(A^2)=\dim_{\mathbb{C}}(E^2)-\dim_{\mathbb{C}}(I^2)$. Now $\dim_{\mathbb{C}}(E^2)=\binom{m}{2}$. Moreover, $I^0=I^1=\{0\}$ implies that $I^2$ is generated by $\partial e_S$, where $S$ is dependent and of cardinality $3$. This implies that $\dim_{\mathbb{C}}(I^2)=k_3$.
\item By Theorem~\ref{Th:oscharpoly}, we only need to compute $\dim_{\mathbb{C}}(A^3)=\dim_{\mathbb{C}}(E^3)-\dim_{\mathbb{C}}(I^3)$. Now $\dim_{\mathbb{C}}(E^3)=\binom{m}{3}$. Moreover, $I^3$ is generated by 2 types of elements: $e_iT$, where $T$ is a generator of $I^2$, and $\partial e_S$, where $S$ is dependent and of cardinality $4$. By the previous point, the number of the first type of elements is $(m-2)k_3$, and a direct computation shows that the number of the second type of elements that are independent with respect to the first type is $c_4+k_4-g_0$.
\end{enumerate}
\end{proof}

Notice that the formula (4) in the previous Theorem is slightly different from the one described in \cite{dongkoh}.

\section{Inductive approach to characteristic polynomials and number of chambers}
Consider $\A=\{H_1,\dots, H_m\}$ be an arrangement in $\mathbb{R}^n$ and $H$ a hyperplane in $\mathbb{R}^n$ that does not to belong to $\A$.
This section is devoted to discuss how to compute the characteristic polynomial and the number of chambers of $\A\cup H$ from the one of $\A$.

Consider $G=(V,E)$ a simple graph with $n=|V|$ and $m=|E|$. Let $v_1,v_2\in V$ such that $e_0=\{v_1,v_2\}\notin E$ and $G'=(V,E')$, where $E'=E\cup \{e_0\}$. Associated to $e_0$
we can consider $H_0$ a hyperplane in $\mathbb{R}^n$. With this setup, we have $\A(G')=\A(G)\cup H_0$.
Using the results of Section 4, Theorem~\ref{Theo:countingchamb} and Theorem~\ref{Theo:charchrompoly}, we obtain the following.


\begin{Theorem}\label{Theo:arrnocyclesadded} Assume that if there exists $S\subseteq\{0,1,\dots, m\}$ such that $S$ is dependent for $\A(G')$ and $|S|\le n$, then $0\notin S$, then 
 $$\chi(\A(G'),t)=\frac{\chi(\A(G),t)(t-1)}{t}.$$
Moreover, $c(\A(G'))=2c(\A(G))$.

\end{Theorem}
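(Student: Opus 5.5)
The plan is to translate the hypothesis into graph language and then reduce to Theorem~\ref{Theo:charnonewcycles}. By the Remark in Section 6, dependent sets of $\A(G')$ correspond to edge sets of $G'$ containing a cycle; minimal dependent sets (circuits) are exactly the edge sets of cycles. A cycle in $G'$ has at most $n$ edges, since it has at most $n$ vertices. Hence every cycle of $G'$ gives a dependent set $S$ with $|S|\le n$. The hypothesis says that no such $S$ contains $0$. Therefore $e_0$ lies on no cycle of $G'$, i.e.\ $e_0$ is a bridge of $G'$.

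With this reformulation, Theorem~\ref{Theo:charnonewcycles} gives
$$\chi(G',t)=\frac{\chi(G,t)(t-1)}{t}.$$
Applying Theorem~\ref{Theo:charchrompoly}(3) to both $G$ and $G'$, and using $\A(G')=\A(G)\cup H_0$, yields the stated formula for $\chi(\A(G'),t)$. The one point to handle carefully is the size bound. If the hypothesis is read only for dependent $S$ with $|S|\le n$, we still need \emph{every} cycle through $e_0$ to give such an $S$. The cycle's own edge set does this, since its length is at most $|V|=n$.

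For the chamber count, use Theorem~\ref{Theo:countingchamb}(3): $c(\A)=|\chi(\A,-1)|$. Evaluating at $t=-1$ gives
$$\chi(\A(G'),-1)=\frac{\chi(\A(G),-1)(-2)}{-1}=2\chi(\A(G),-1).$$
Taking absolute values gives $c(\A(G'))=2c(\A(G))$.

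A cleaner alternative is deletion--restriction, Theorem~\ref{Theo:delrestrarr}, together with the standard fact $c(\A)=c(\A\setminus H)+c(\A^H)$. Since $e_0$ is a bridge, contracting it identifies $\A(G)^{H_0}$ with an arrangement whose chamber count equals $c(\A(G))$. Under the restriction map, the lattice of $\A(G)^{H_0}$ is isomorphic to that of $\A(G)$, because no new dependencies are created. This confirms the factor $2$. I expect the main obstacle to be justifying cleanly that the hypothesis forces $e_0$ to be a bridge, via the correspondence between cycles and circuits in the graphic matroid. The rest is substitution.
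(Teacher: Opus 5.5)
Your proof is correct and is essentially the paper's own argument. The paper derives this theorem directly from Theorem~\ref{Theo:charnonewcycles}, Theorem~\ref{Theo:charchrompoly} and Theorem~\ref{Theo:countingchamb}, which is exactly your reduction: the hypothesis forces $e_0$ to be a bridge, you transfer the chromatic identity to $\A(G')$, and you evaluate at $t=-1$. Your deletion--restriction alternative also works, provided the restricted arrangement is written as $\A(G')^{H_0}=\A(G'/e_0)$ rather than $\A(G)^{H_0}$.
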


\begin{Theorem}\label{Theo:arronecyclesadded} Assume there exists $S\subseteq\{0,1,\dots, m\}$ such that $S$ is dependent for $\A(G')$, $0\in S$, and there does not exist $T\subseteq\{0,1,\dots, m\}$ such that $T$ is dependent for $\A(G')$, $0\in T$, $T\ne S$ and $|T|\le|S|$, then 
$$\chi(\A(G'),t)=\frac{\chi(\A(G),t)\sum_{i=0}^{|S|-2}(-1)^i(t-1)^{|S|-2-i}}{(t-1)^{|S|-2}}.$$
Moreover, $c(\A(G'))=(\sum_{i=0}^{|S|-2}\frac{1}{2^i})c(\A(G))=(2-\frac{1}{2^{|S|-2}})c(\A(G))$.

\end{Theorem}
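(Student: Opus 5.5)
The plan is to translate the hypothesis into graph language and then apply Theorem~\ref{Theo:addoneminimalcycle}. Once the chromatic polynomial is known, the chamber count will follow from Theorem~\ref{Theo:countingchamb} by evaluating at $t=-1$.

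\emph{Graph translation.} By the Remark in Section 6, dependent sets of $\A(G')$ containing $0$ correspond to edge sets of subgraphs of $G'$ that contain a cycle through $e_0$. The minimal ones are exactly cycles of $G'$ through $e_0$. Take $S$ minimal among dependent sets containing $0$; shrinking $S$ keeps it dependent, so the uniqueness hypothesis forces $S$ itself to be minimal. Hence $S$ is the edge set of a cycle $H\cong C_m$ with $m=|S|$, vertices $v_1,v_2,\dots,v_m$, and edges $e_0,\{v_2,v_3\},\dots,\{v_{m-1},v_m\},\{v_m,v_1\}$.

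\emph{Matching the hypothesis of Theorem~\ref{Theo:addoneminimalcycle}.} That theorem needs $v_3,\dots,v_m$ to be cut vertices of $G$, which by the Remark after it is equivalent to $H$ being path-intersecting. So I must check that there is no path in $G$ that leaves $H$ at some $v_i$ and re-enters at a non-adjacent $v_j$. Suppose such a path $P$ exists. Then $P$ together with the arc of $H$ between $v_i$ and $v_j$ that contains $e_0$ forms a cycle $T$ through $e_0$. The other arc, of length at least $2$, is avoided, so $T\ne S$.

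The bound $|T|\le|S|$ is not automatic, and this is the main obstacle. My first attempt is to use both arcs. Since $v_i,v_j$ are non-adjacent on $H$, both arcs have length at least $2$. The arc avoiding $e_0$, together with $P$, forms a cycle of $G$ that does not contain $e_0$. From it I would try to produce, by symmetric difference, a second cycle $T$ through $e_0$ with $T\ne S$ and $|T|\le|S|$; this works for instance whenever $P$ is no longer than the replaced arc.

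Reading the hypothesis literally, the statement quantifies over all dependent $T\ni 0$ with $|T|\le |S|$. Any $T$ of that size formed from a shortcut path therefore violates the hypothesis, and this is how I would rule out short shortcuts. For long shortcuts, I would try to argue within the intended reading of the Section 4 construction, which adds edges along a chosen spanning-tree order so that new cycles are minimal. I would flag this point as where the precise interpretation of the hypothesis is used.

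\emph{Applying the earlier results.} Granting path-intersection, Theorem~\ref{Theo:addoneminimalcycle} together with Theorem~\ref{Theo:charchrompoly}(3) gives
\[
\chi(\A(G'),t)=\frac{\chi(\A(G),t)\sum_{i=0}^{m-2}(-1)^i(t-1)^{m-2-i}}{(t-1)^{m-2}} .
\]

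\emph{Chamber count.} Set $t=-1$, so $t-1=-2$. Each summand becomes $(-1)^i(-2)^{m-2-i}$, and dividing by $(-2)^{m-2}$ gives the factor
\[
\sum_{i=0}^{m-2}(-1)^i(-2)^{-i}=\sum_{i=0}^{m-2}2^{-i}.
\]
This factor is positive, so taking absolute values and applying Theorem~\ref{Theo:countingchamb}(3) yields
\[
c(\A(G'))=\Bigl(\sum_{i=0}^{|S|-2}2^{-i}\Bigr)c(\A(G))=\Bigl(2-\frac{1}{2^{|S|-2}}\Bigr)c(\A(G)),
\]
using the geometric sum.
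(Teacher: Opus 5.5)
The step you flag as the main obstacle is a genuine gap, and it cannot be closed: under the hypothesis as written, the conclusion is false. Take $G=C_5$ with vertices $v_1,v_3,v_2,b,a$ in cyclic order, and $e_0=\{v_1,v_2\}$. The cycles of $G'$ are the triangle $v_1v_2v_3$, the $4$-cycle $v_1v_2ba$, and $G$ itself. So the only dependent set of size at most $3$ containing $0$ is the triangle $S$, and the hypothesis holds with $|S|=3$, whether ``dependent'' is read literally or as ``circuit''.

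The statement predicts $\chi(\A(G'),t)=\chi(C_5,t)\frac{t-2}{t-1}=t(t-2)^2(t^2-2t+2)$. But $G'$ is a triangle and a $4$-cycle glued along $e_0$, so Theorem~\ref{Theo:cliquesum} gives $\chi(\A(G'),t)=t(t-1)(t-2)(t^2-3t+3)$. At $t=3$ these give $18$ versus $15$, and the chamber counts are $42$ versus $\frac32\cdot 30=45$.

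The culprit is the second $v_1$--$v_2$ path $v_1abv_2$ in $G$. Being longer than $S$, it escapes the size restriction. It joins two vertices that are adjacent in $H$ (through $e_0$), so it is not one of the shortcuts between non-adjacent $v_i,v_j$ that you analyse. It is also invisible to path-intersection, which is vacuous for a triangle. So even a successful check of path-intersection would not give what the proof of Theorem~\ref{Theo:addoneminimalcycle} really uses, namely that each $v_k$, $3\le k\le m$, separates $v_1$ from $v_2$ in $G$. Appealing to ``the intended reading of the Section~4 construction'' is not available either, since the theorem must hold under its own hypothesis.

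Your first step also fails in the literal reading: a set that is minimal among dependent sets containing $0$ need not be a circuit. For $G=K_3\sqcup K_1$ and $e_0$ joining the isolated vertex to the triangle, $S=\{0\}\cup E(K_3)$ satisfies the hypothesis with $|S|=4$. Yet $e_0$ is a bridge, $\chi(\A(G'),t)=\chi(\A(G),t)\frac{t-1}{t}$, and the stated formula is not even a polynomial.

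The paper itself only says the result follows from Section~4, which is exactly your route, so it has the same hole. The route works under a stronger hypothesis, for instance that $S$ is the \emph{only} circuit of $\A(G')$ containing $0$. Then a $v_1$--$v_2$ path in $G-v_k$ would yield a second cycle through $e_0$, so every $v_k$ separates $v_1$ from $v_2$. The decomposition in the proof of Theorem~\ref{Theo:addoneminimalcycle} then applies (alternatively, $H$ is then a block of $G'$ and the block product formula gives the result). Your evaluation at $t=-1$, which is correct, finishes the proof.
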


A natural question is to understand if the previous statements are true also for non graphic arrangements. Unfortunately they are not. 
For example, it is enough to notice that every graphic arrangement is not essential and hence its characteristic polynomial is always divisible by $t$.
This implies that Theorem~\ref{Theo:arrnocyclesadded} cannot be true for essential arrangements.
However, we can still obtain similar results.

Given $\A$ a central arrangement in $\mathbb{R}^3$ with $m$ hyperplanes, we can associate to it the corresponding line arrangement $\tilde{\A}$ in $\mathbb{P}^2$, and 
$\chi(\A,t)=\chi(\tilde{\A},t)$. This means that if one wants to consider $H_0$ a hyperplane in $\mathbb{R}^3$ containing the origin that does not belong to $\A$ and wants to compute 
$\chi(\A\cup H_0,t)$, one can do it by considering the corresponding line $\ell_0$ in $\mathbb{P}^2$ that does not belong to $\tilde{\A}$ and compute $\chi(\tilde{\A}\cup\ell_0,t)$.

\begin{Theorem} Let $\tilde{\A}$ be a line arrangement in $\mathbb{P}^2$ and $\ell_0$ a line in $\mathbb{P}^2$ that does not belong to $\tilde{\A}$. If
$$|\ell_0\cap(\bigcup_{\ell\in\tilde{\A}}\ell)|=k,$$
then $$\chi(\tilde{\A}\cup\ell_0,t)=\chi(\tilde{\A},t)-(t^2-kt+(k-1)).$$
Moreover, $c(\A\cup H_0)=c(\A)+2k$.
\end{Theorem}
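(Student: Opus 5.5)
We work with the central arrangement $\A\cup H_0$ in $\mathbb{R}^3$ rather than with $\tilde{\A}\cup\ell_0$, using $\chi(\A,t)=\chi(\tilde{\A},t)$. The key tool is the deletion--restriction formula of Theorem~\ref{Theo:delrestrarr} applied to $\A\cup H_0$ and the hyperplane $H_0$:
$$\chi(\A\cup H_0,t)=\chi(\A,t)-\chi((\A\cup H_0)^{H_0},t).$$
The deletion is $\A$ itself, so it remains to identify the restriction and compute its characteristic polynomial.

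The restriction $(\A\cup H_0)^{H_0}$ is a central arrangement in $H_0\cong K^2$. Its hyperplanes are the lines $H\cap H_0$ for $H\in\A$, counted without repetition. In the projective picture, $H\cap H_0$ is the line through the origin corresponding to the point $\ell\cap\ell_0$ of $\mathbb{P}^2$. Two hyperplanes $H,H'\in\A$ give the same line in $H_0$ exactly when $\ell\cap\ell_0=\ell'\cap\ell_0$. Hence the number of distinct lines in the restriction equals $|\ell_0\cap(\bigcup_{\ell\in\tilde{\A}}\ell)|=k$. Because $\ell_0\notin\tilde{\A}$, each $\ell$ meets $\ell_0$ in exactly one point, so no hyperplane of $\A$ contains $H_0$ and none is discarded. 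Proposition~\ref{prop:charpoly2dim} then gives $\chi((\A\cup H_0)^{H_0},t)=t^2-kt+(k-1)$, which yields the polynomial identity.

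For the chambers, Theorem~\ref{Theo:countingchamb}(3) gives $c(\A)=|\chi(\A,-1)|$. The characteristic polynomial of a real arrangement alternates in sign, so $(-1)^3\chi(\A,-1)=c(\A)$ holds for every central arrangement in $\mathbb{R}^3$. Evaluating the identity at $t=-1$ and multiplying by $-1$ gives
$$c(\A\cup H_0)=c(\A)+(1+k+k-1)=c(\A)+2k.$$
Equivalently, the $k$ lines cut $H_0$ into $2k$ chambers, and each of them splits exactly one chamber of $\A$ into two. One edge case needs care: if $\A$ is empty, then $k=0$ and the formula degenerates. We note this case separately, or assume $\A\neq\emptyset$.

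The step that needs the most care is the identification of the restriction: showing that distinct intersection points on $\ell_0$ correspond exactly to distinct hyperplanes of $(\A\cup H_0)^{H_0}$, so that the $m$ hyperplanes of $\A$ collapse to precisely $k$ lines. The sign convention in the chamber count is the other place to double-check.
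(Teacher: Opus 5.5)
Your proof is correct and follows the same route as the paper: deletion--restriction (Theorem~\ref{Theo:delrestrarr}) at $H_0$, identifying the restriction with $k$ distinct lines through the origin in $H_0$ so that Proposition~\ref{prop:charpoly2dim} applies, and then evaluating at $t=-1$ via Theorem~\ref{Theo:countingchamb}; you also supply the details and the sign check that the paper leaves implicit. Your caveat about $k=0$ is well taken, since Proposition~\ref{prop:charpoly2dim} needs at least one line and both stated formulas fail when $\A=\emptyset$ (for instance $c(\{H_0\})=2\neq 1=c(\emptyset)+0$).
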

\begin{proof} The first part of the result follow from using Theorem~\ref{Theo:delrestrarr} and Proposition~\ref{prop:charpoly2dim}. The second part follows from the first one using 
Theorem~\ref{Theo:countingchamb}.
\end{proof}

\paragraph{\textbf{Acknowledgements}} The authors would like to thank D. Ernst, N. Sieben and T. Shuhei for many helpful discussions. During the preparation of this article the second author was supported by Perko Research Award.



\end{document}